\documentclass{article}

\usepackage{arxiv}

\usepackage[utf8]{inputenc} 
\usepackage[T1]{fontenc}    
\usepackage{hyperref}       
\usepackage{url}            
\usepackage{booktabs}       
\usepackage{amsfonts}       
\usepackage{nicefrac}       
\usepackage{microtype}      
\usepackage{lipsum}

\usepackage{amsmath}

\usepackage{color}
\usepackage{amsthm}
\theoremstyle{definition}
\newtheorem{definition}{Definition}[section]
\newtheorem{prop}{Proposition}[section]
\newtheorem{lemma}{Lemma}[section]
\newtheorem{thm}{Theorem}[section]

\newtheorem{assumption}{Assumption}[section]
\newtheorem{corr}{Corollary}[section]
\theoremstyle{remark}
\newtheorem{remark}{Remark}
\title{The Galerkin analysis for the random periodic solution of semilinear stochastic evolution equations}

\author{
Yue Wu$^{*}$\\
  Department of Mathematics and Statistics\\
   University of Strathclyde\\
    Glasgow, G1 1XQ, UK;\\
    Mathematical Institute\\
    University of Oxford\\
    Oxford, UK\\
  \texttt{yue.wu@strath.ac.uk$^{*}$} \\
   \And
Chenggui Yuan\\
  Department of Mathematics\\
  Swansea University\\
  Swansea, SA1 8EN, UK \\
  \texttt{c.yuan@swansea.ac.uk} \\
}

\begin{document}
\maketitle

\begin{abstract}
In this paper we study the numerical method for approximating the random periodic solution of semilinear stochastic evolution equations.  The main challenge lies in proving a convergence over an infinite time horizon while simulating infinite-dimensional objects.
We first show the existence and uniqueness of the random periodic solution to the equation as the limit of the pull-back flows of the equation,  and observe that its mild form is well-defined in the intersection of a family of decreasing Hilbert spaces.
Then we propose a Galerkin-type exponential integrator scheme and establish its convergence rate of the strong error to the mild solution,   where the order of convergence directly depends on the space (among the family of Hilbert spaces) for the initial point to live.  We finally conclude with a best order of convergence that is arbitrarily close to 0.5. 
\end{abstract}

\keywords{Random periodic solution \and Stochastic evolution equations \and Galerkin method \and Discrete exponential integrator scheme}

\section{Introduction}
The random periodic solution is a new concept to characterize the presence of random periodicity in the long run of some stochastic systems. On its first appearance in \cite{zhao2009}, the authors gave the definition of the random periodic solutions of random dynamical systems and showed the existence of such periodic solutions for a $C^1$ perfect cocycle on a cylinder. This is followed by another seminal paper \cite{rpssde2011}, where the authors not only defined the random periodic solutions for semiflows but also provided a general framework for its existence. Namely, instead of following the traditional geometric method of establishing the Poincar\'e mapping, a new analytical method for coupled infinite horizon forward-backward integral equations was introduced. This pioneering study boosts a series of work, including 
the existence of random periodic solutions to stochastic partial differential equations (SPDEs) \cite{rpsspde2011}, the existence of anticipating random periodic solutions \cite{wu2016,wu2018}, periodic measures \cite{feng2020}, etc. 

Let us recall the definition of the random periodic solution for stochastic semi-flows given in \cite{rpssde2011}. Let $H$ be a separable Banach space. Denote by $(\Omega,{\cal F},\mathbb{P},(\theta_s)_{s\in \mathbb{R}})$ a metric dynamical system and $\theta_s:\Omega\to\Omega$ is  assumed to be measurably invertible for all $s\in \mathbb{R}$.
Denote $\Delta:=\{(t,s)\in \mathbb{R}^2, s\leq t\}$. Consider a stochastic semi-flow $u: \Delta \times \Omega\times H\to H$, which satisfies the following standard condition
\begin{eqnarray}{\label{16}}
u(t,r,\omega)=u(t,s,\omega)\circ u(s,r,\omega),\ \ {\rm for\ all } \ r\leq s\leq t,\ r, s,t\in \mathbb{R},\ \mbox{for }a.e.\ \omega\in\Omega.
\end{eqnarray}
We do not assume the map $u(t,s,\omega): H\to H$ to be invertible for $(t,s)\in \Delta,\ \omega \in \Omega$. 

\begin{definition}
\label{feng-zhao1}
A {\bf random periodic path of period $\tau>0$ of the semi-flow} $u: \Delta\times \Omega\times H\to H$
 is an ${\cal F}$-measurable
map $y:\mathbb{R}\times \Omega\to H$ such that for a.e. $\omega\in \Omega$
\begin{eqnarray}\label{eqn:def}
\Big\{\begin{array}{l}u(t,s, y(s,\omega), \omega)=y(t,\omega),\ \ \forall t\geq s\\

y(s+\tau,\omega)=y(s, \theta_\tau \omega),\ \ \forall s\in  \mathbb{R}.
                                     \end{array}
\end{eqnarray}
\end{definition}  

Note that Definition \ref{feng-zhao1} covers both the deterministic periodic path and the random fixed point (c.f. \cite{arnold}), also known as stationary point as its special cases.  To see the latter one,  one may assume \eqref{eqn:def} holds for any $\tau>0$,  and define $\hat{y}(\theta_t\omega)=y(0,\theta_t\omega)$ for $t>0$,  then one can conclude that $u(t,0, \hat{y}(\omega), \omega)=\hat{y}(\theta_t\omega)$ from \eqref{eqn:def},  which coincides with the definition of random fixed point (also termed as the \emph{stationary solution}) given in \cite{arnold}.  A well-known example for stationary solution is given by $Y(\omega)=\int_{-\infty}^{0}e^s \mathrm{d}W(s)$,  for the one-dimensional random dynamical system $\phi(t,\omega)x=xe^{-t}+\int_0^te^{-(t-s)}\mathrm{d}W(s,\omega)$ generated from the following Ornstein-Uhlenbeck process:
\begin{equation}\label{eqn:OU}
d y(t)=-y(t)\mathrm{d}t+dW(t), \ \ y(0)=x\in \mathbb{R}, \ \ t>0,
\end{equation}
where $W: (t,\omega) \mapsto W(t,\omega)$ is a one-dimensional two-sided Wiener process on $(\Omega, \mathcal{F}, \mathbb{P})$,  and as a convention,   $\omega$ is usually hidden in the notation $W(s,\omega)$.
One can verify that $\phi(t,\omega)Y(\omega)=Y(\theta_t \omega)$.  If in addition,  we add a periodic drift term to Eqn.  \eqref{eqn:OU} such that it reads as
\begin{equation}\label{eqn:OU2}
d y(t)=(-y(t)+\sin(t))\mathrm{d}t+dW(t), \ \ y(s)=x\in \mathbb{R}, \ \ t>0,
\end{equation}
then it is not hard to see the semiflow for \eqref{eqn:OU2} is given by $\varphi(t,s,x,\omega):=xe^{-(t-s)}+\int_s^te^{-(t-r)}\sin(r)\mathrm{d}r+\int_s^te^{-(t-r)}\mathrm{d}W(r)$.  Now define $Y(t,\omega)=\int_{-\infty}^t e^{-(t-s)}\sin(s)\mathrm{d}s+\int_{-\infty}^t e^{-(t-s)}\mathrm{d}W(s)$.  One can verify that $Y(t,\omega)=\varphi(t,s,\omega)Y(s,\omega)$ and $Y(t+2\pi,\omega)=Y(t,\theta_{2\pi}\omega)$. Indeed,
\begin{align*}
Y(t+2\pi,\omega)&=\int_{-\infty}^{t+2\pi} e^{-(t+2\pi-s)}\sin(s)\mathrm{d}s+\int_{-\infty}^{t+2\pi} e^{-(t+2\pi-s)}\mathrm{d}W(s,\omega)\\
&=\int_{-\infty}^{t} e^{-(t-\hat{s})}\sin(\hat{s}+2\pi)\mathrm{d}\hat{s}+\int_{-\infty}^{t} e^{-(t-\hat{s})}\mathrm{d}W(\hat{s}+2\pi,\omega)\\
&=\int_{-\infty}^{t} e^{-(t-\hat{s})}\sin(\hat{s})\mathrm{d}\hat{s}+\int_{-\infty}^{t} e^{-(t-\hat{s})}\mathrm{d}(W(\hat{s}+2\pi,\omega)-W(2\pi,\omega))\\
&=\int_{-\infty}^{t} e^{-(t-\hat{s})}\sin(\hat{s})\mathrm{d}\hat{s}+\int_{-\infty}^{t} e^{-(t-\hat{s})}\mathrm{d}W(\hat{s},\theta_{2\pi}\omega)=Y(t,\theta_{2\pi}\omega)
\end{align*}
where we use in the last two lines the measure preserving property of Wiener process stated in Assumption \ref{as:preserve}.  Therefore $Y$ is a random periodic path for semiflow $\varphi$ generated from SDE \eqref{eqn:OU2}.

In general, random periodic solutions cannot be solved explicitly.  Even for the simple case as we showcased in Eqn. \eqref{eqn:OU2},  one relies on numerical approaches to simulate the random periodic path $Y$.  For the dissipative system generated from some SDE with a global Lipchitz condition, the convergences of a forward Euler-Maruyama method and a modified Milstein method to the random period solution have been investigated in \cite{rpsnumerics2017}. For SDEs with a monotone drift condition, one benefits a flexible choice of stepsize from applying the implicit method instead \cite{wu2021}. Each of these numerical schemes admits their own random periodic solution, which approximates the random periodic solution of the targeted SDE as the stepsize decreases. The main challenge lies in proving a convergence of an infinite time horizon. In this paper, we consider approximating the random periodic trajectory of SPDEs, where, we encounter an additional obstacle of simulating infinite-dimensional objects.  For this, we employ the spectral Galerkin method (c.f. \cite{spectrum}) for spatial dimension reduction, construct a discrete exponential integrator scheme based on the spatial discretization and conclude the existence and uniqueness of random periodic solution from the discrete scheme.  To the best of our knowledge,  this is the first study that works on the numerical analysis (Galerkin analysis) of random periodic solutions for SEEs.  The Galerkin-type method has been intensively used to simulate solutions of
parabolic SPDEs over finite-time horizon \cite{grecksch1996, gyongy1998,gyongy1999, kruse2014c,kruse2014o, kruse2019}, and it is recently applied to approximate stationary distributions for SPDEs \cite{bao2014}. For the error analysis of both strong and weak approximation of semilinear stochastic evolution equations (SEEs) through Galerkin approximation, we refer
the reader to the monograph \cite{kruse2014book}.  

 Let $(H, (\cdot, \cdot), \| \cdot \|)$ and $(U,
(\cdot, \cdot)_U, \| \cdot \|_U)$ be two separable $\mathbb{R}$-Hilbert spaces. For a
given $t_0,T \in (-\infty,\infty)$ with $t_0<T$ we denote by $(\Omega, \mathcal{F}, (\mathcal{F}_t)_{t \in [t_0,T]},
\mathbb{P})$ a filtered probability space satisfying the usual conditions. By
$(W(t))_{t \in [t_0,T]}$ we denote an $(\mathcal{F}_t)_{t \in [t_0,T]}$-Wiener
process on $U$ with associated covariance operator $Q \in \mathcal{L}(U)$,
which is not necessarily assumed to be of finite trace.  Denote by $\mathcal{L}_2^0=\mathcal{L}_2^0(H) =\mathcal{L}_2(Q^{\frac{1}{2}}(U),H)$ the set of
all Hilbert-Schmidt operators from $Q^{\frac{1}{2}}(U)$ to $H$. 

Our goal is to study and approximate the random periodic mild solution to SEEs of the form
\begin{align}
  \label{eq:SPDE}
  \begin{cases}
    \mathrm{d}{X^{t_0}_t} = 
    \big[ -A X^{t_0}_t + f(t,X^{t_0}_t) \big] \mathrm{d}{t}+g(t,X^{t_0}_t) \mathrm{d}{W(t)},& \quad \text{for } t \in (t_0,T],\\
    X^{t_0}_{t_0} = \xi.& 
  \end{cases}
\end{align}
Throughout the paper, we impose the following essential assumptions.
\begin{assumption}
  \label{as:A}
  The linear operator $A \colon \text{dom}(A) \subset H \to H$ is densely defined,
  self-adjoint, and positive definite with compact inverse.
\end{assumption}

Assumption \ref{as:A} implies the existence of a positive, increasing
sequence $(\lambda_i)_{i\in \mathbb{N}} \subset \mathbb{R}$ such that 
$0<\lambda_1 \le \lambda_2 \le \ldots $ with $\lim_{i\to 
\infty}\lambda_i = \infty$, and of an orthonormal basis $(e_i)_{i\in
\mathbb{N}}$ of $H$ such that $A e_i = \lambda_i  e_i$ for every $i \in
\mathbb{N}$. Indeed we have that 
  $$\text{dom}(A):=\{x\in H:\sum_{n=1}^\infty \lambda_n^2(x,e_n)^2<\infty \}.$$ In addition, it also follows from Assumption~\ref{as:A} that $-A$ is the
infinitesimal generator of an analytic semigroup $(S(t))_{t \in [0,\infty)}
\subset \mathcal{L}(H)$ of contractions. More precisely, the family
$(S(t))_{t \in [0,\infty)}$ enjoys the properties
\begin{align*}
  S(0) &= \mathrm{Id} \in \mathcal{L}(H),\\
  S(s + t) &= S(s) \circ S(t) = S(t) \circ S(s), 
  \quad \text{for all } s,t \in [0,\infty),
\end{align*}
and
\begin{align}
  \label{eq:stab_S}
  \sup_{t \in [0,\infty)} \| S(t) \|_{\mathcal{L}(H)} \le 1.  
\end{align}

Further, let us introduce fractional powers of $A$, which are used to measure
the (spatial) regularity of the mild solution \eqref{eq:mild}. For any $r\in
[-1,1]$ we define the operator $A^{\frac{r}{2}} \colon \text{dom}(A^{\frac{r}{2}}) =
\{x\in H \, : \, \sum_{j=1}^{\infty} \lambda_j^r (x,e_j)^2 < \infty \}
\subset H \to H$ by 
\begin{equation}
  \label{eqn:fractional_r}
  A^{\frac{r}{2}} x
  := \sum_{j=1}^{\infty} \lambda_j^{\frac{r}{2}} (x,e_j) e_j,
  \quad \text{for all } x \in \text{dom}(A^{\frac{r}{2}}).
\end{equation}
Then, by setting $(\dot{H}^r,(\cdot,\cdot)_r, \|\cdot\|_r) :=
(\text{dom}(A^{\frac{r}{2}}), (A^{\frac{r}{2}} \cdot, A^{\frac{r}{2}}\cdot),
\|A^{\frac{r}{2}}\cdot\|)$, we obtain a family of
separable Hilbert spaces. Clearly, for any $0\leq r_1<r_2\leq 1$, we have that $\text{dom}(A)\subset\dot{H}^{r_2}\subset \dot{H}^{r_1}\subset H$.

\begin{assumption}
  \label{as:ini}
  The initial value
  $\xi \colon \Omega \to H$ satisfies $\xi \in L^{2}(\Omega, \mathcal{F}_{t_0},\mathbb{P};
  H)$. Denote by $C_\xi$ a constant such that $\mathbb{E}[\|\xi\|^2]\leq C_\xi^2$.
\end{assumption}

\begin{assumption}
  \label{as:fg}
  The mappings $f \colon \mathbb{R} \times H \to H$ and $g \colon \mathbb{R} \times H \to \mathcal{L}^2_0$ are continuous and periodic in time with period $\tau$.
  Moreover, there exist $\kappa\in (0,1/2]$,  $C_f, C_g, C_{f,g} \in
  (0,\infty)$ such that 
  \begin{align*}
  & \| f(t,u_1) - f(t,u_2) \| \leq C_f \|u_1 - u_2\|,\qquad \| f(t_1,u) - f(t_2,u) \|\leq C_f (1+\| u\|)|t_1 -
    t_2|^{\kappa},\\
    &\|g(t,u_1)-g(t,u_2)\|_{\mathcal{L}^2_0}\leq C_g \| u_1-u_2\|,\qquad \| g(t_1,u) - g(t_2,u) \|_{\mathcal{L}^2_0}\leq C_g (1+\| u\|)|t_1 -
    t_2|^{\kappa},
  \end{align*}
  for all $u,u_1, u_2 \in H$ and $t,t_1,t_2 \in [0,\tau)$.
\end{assumption}
\begin{remark}
Indeed the condition on $f$ can be weakened  to a local Lipschitz condition for the existence and uniqueness of the random periodic solution.  However,  to show the continuity of the random periodic solution or to conduct the numerical analysis,  one still need Assumption \ref{as:fg}. 
\end{remark}
\begin{remark}
One will see that the H\"older continuity in temporal variable imposed on both diffusion and drift terms plays an important role in the numerical analysis part.  To be more specific,  it partly determines the order of convergence of the proposed numerical scheme.
\end{remark}
\begin{remark} Note that the assumption on $g$ excludes identity in $\mathcal{L}(H)$.  One may refer to \cite{bao2014} for techniques handing a slight more general assumption on $g$,  which allows $g$ to be constant in $\mathcal{L}(H)$.  
\end{remark}

From Assumption~\ref{as:fg} we directly deduce a linear growth bound
of the form
\begin{align}
  \label{eq3:linear_f}
  \|f(t,u)\|+\|g(t,u)\|_{\mathcal{L}^2_0}\leq L_{f,g} ( 1 + \|u\| ),\quad \text{for all }
  t \in \mathbb{R},\, u \in H,
\end{align}
where $L_{f,g}:=\|f(0,0)\|+\|g(0,0)\|_{\mathcal{L}^2_0}+(C_f+C_g)(1+\tau^{\frac{1}{2}})$.

Under these assumptions the SEE \eqref{eq:SPDE} admits a unique \emph{mild solution} $X_{\cdot}^{t_0} \colon [t_0,T] \times
\Omega \to H$ such that it is uniquely determined by the
variation-of-constants formula (c.f. \cite{daprato1996})
\begin{align}
  \label{eq:mild}
  X^{t_0}_t(\xi) = S(t-t_0) \xi + \int_{t_0}^t S(t - s) f(s,X^{t_0}_s) \mathrm{d}{s} + \int_{t_0}^t S(t-s) g(s,X^{t_0}_s)
  \mathrm{d}{W(s)},
\end{align}
which holds $\mathbb{P}$-almost surely for all $t \in [t_0,T]$.

\subsection{The pull-back}
To ensure the existence of random periodic solution, we need some additional assumptions on the Wiener process and on $C_f$ and $\lambda_1$:
\begin{assumption}\label{as:lambda1}
  The constant $C_f$ in Assumption \ref{as:fg} and the eigenvalue $\lambda_1$ of $A$ satisfy $C_f <\lambda_1$.
\end{assumption}
\begin{assumption}
  \label{as:preserve}
  There exists a standard $\mathbb{P}$-preserving ergodic Wiener shift $\theta$ such that $\theta_s (\omega)(t)=W(t+s)-W(s)$ for $s,t\in \mathbb{R}$, ie, $$\mathbb{P}\circ (\theta_s W(t))^{-1}=\mathbb{P}\circ (W(t+s)- W(s))^{-1}.$$
\end{assumption}
Denote by $X^{-k\tau}_t(\xi,\omega)
$ the solution starting from time $-k\tau$. The uniform boundedness of $X^{-k\tau}_t(\xi,\omega)
$ in the $L^2$ sense can be guaranteed under Assumption \ref{as:A} to \ref{as:lambda1}. Further, under Assumption \ref{as:A} to Assumption \ref{as:preserve}, one is able to show that when $k\to \infty$, the
pull-back $X^{-k\tau}_t(\xi)$ has a unique limit $X^*
_t$ in $L^2(\Omega; H)$, moreover, $X^*
_t$ is the random periodic solution of SEE \eqref{eq:SPDE}, satisfying
\begin{equation}
  \label{eq:limit}
  X^*_t =  \int_{-\infty}^t S(t - s) f(s,X^{*}_s) \mathrm{d}{s} + \int_{-\infty}^t S(t-s) g(s,X^{*}_s) 
  \mathrm{d}{W(s)}.
\end{equation}
Surprisingly, the mild form \eqref{eq:limit} can be shown well-defined in $L^2(\Omega;\dot{H}^r)$ for any $r\in (0,1)$. More details about the proof can be found in Section \ref{sec:existence}. Besides, the continuity of $X^{-k\tau}_t(\xi,\omega)
$ is characterized in Section \ref{sec:existence} for error analysis in Section \ref{sec:numerical}. 
%

\subsection{The Galerkin approximation}
Next, we formulate the assumptions and notations on the spatial discretization. To this end, define finite-dimensional subspaces $H_n$ of $H$ spanned by the first $n$ eigenfunctions of the basis, ie, $H_n:=\{e_1,\ldots,e_n\}$, and let $P_n:H\to H_n$ be the orthogonal projection. Note that $H_n \subset \dot{H}^r$ for any $r\in \mathbb{R}$. By doing this, we are able to further introduce the following notations: $A_n =P_n A\in \mathcal{L}(H_n)$, $S_n(t)=P_nS(t)\in \mathcal{L}(H_n)$, $f_n=P_n f:\mathbb{R}\times H_n \to H_n$ and $g_n=P_ng:\mathbb{R}\times H_n \to \mathcal{L}_0^2(H_n)$. Then the Galerkin approximation to \eqref{eq:SPDE} can be formulated as follows
\begin{align}
  \label{eq:G}
  \begin{cases}
    \mathrm{d}{X^{n,t_0}_t} = 
    \big[ -A_n X^{n,t_0}_t + f_n(t,X^{n,t_0}_t) \big] \mathrm{d}{t}+g_n(t,X^{n,t_0}_t) \mathrm{d}{W(t)},& \quad \text{for } t \in (t_0,T],\\
    X^{n,t_0}_{t_0} = P_n\xi.& 
  \end{cases}
\end{align}
Applying the spectral Galerkin method results in
a system of finite dimensional stochastic differential equations. Note that for $x,y\in H_n$, we have that $A_nx=Ax$, $S_n(t)x=S(t)x$ and $\big(x, f_n(t,y)\big) =\big( x,f(t,y)\big)$. 
\begin{remark}
It is easy to see there exists an isometry between $H_n$ and $\mathbb{R}^n$.
An simply calculation leads to the existence of a unique strong solution to \eqref{eq:G}. The uniform boundedness of $X^{n,-k\tau}_t$ as well as the existence of the random periodic solution to \eqref{eq:G} are simple consequences of the corresponding properties of $X^{-k\tau}_t$.
\end{remark}
Let us fix an equidistant partition $\mathcal{T}^h:=\{jh,\ j\in \mathbb{Z} \}$ with stepsize $h\in (0,1)$. Note that $\mathcal{T}^h$ stretch along the real line because we are dealing with an infinite time horizon problem. Then to simulate the solution to \eqref{eq:G} starting at $-k\tau$, the \emph{discrete exponential integrator scheme} on $\mathcal{T}^h$ is given by the recursion 
\begin{align}
  \label{eq:RandM}
  \begin{split}
    \hat{X}_{-k\tau+(j+1)h}^{n,-k\tau} =& S_n(h)\Big(\hat{X}_{-k\tau+jh}^{n,-k\tau} +h f_n\big(-k\tau+jh,   \hat{X}_{-k\tau+jh}^{n,-k\tau} \big)+ 
    g_n(-k\tau+jh,\hat{X}_{-k\tau+jh}^{n,-k\tau} )\Delta W_{-k\tau+jh}\Big),
  \end{split}
\end{align}
for all $j \in \mathbb{N}$, where the initial value $\hat{X}_{-k\tau}^{n,-k\tau}  = P_n\xi$. 
Moreover, if we define
$$\bar{X}_t^{-k\tau} =\hat{X}_{-k\tau+jh}^{n,-k\tau}\ \ \text{and}\ \Lambda(t) =-k\tau+jh$$
for $t\in [-k\tau+jh,-k\tau+(j+1)h)$, it follows that the continuous version of \eqref{eq:RandM} is therefore
\begin{align}
  \label{eq:con_RandM}
  \begin{split}
    \hat{X}_{t}^{n,-k\tau} &= S_n(t+k\tau)P_n \xi +\int_{-k\tau} ^t   S_n\big(t-\Lambda(s)\big) f_n\big(\Lambda(s),   \bar{X}_{s}^{n,-k\tau} \big)\mathrm{d}s\\
    &\quad + 
    \int_{-k\tau} ^t   S_n\big(t-\Lambda(s)\big) g_n\big(\Lambda(s),  \bar{X}_{s}^{n,-k\tau}\big)\mathrm{d}W(s)\\
   &= S(t+k\tau)P_n \xi +\int_{-k\tau} ^t   S\big(t-\Lambda(s)\big) f_n\big(\Lambda(s),   \bar{X}_{s}^{n,-k\tau} \big)\mathrm{d}s\\
   &\quad+ 
    \int_{-k\tau} ^t   S\big(t-\Lambda(s)\big) g_n\big(\Lambda(s), \bar{X}_{s}^{n,-k\tau} \big)\mathrm{d}W(s),
  \end{split}
\end{align}
with differential form
\begin{align}\label{eqn:diffform}
  \begin{split}
     \mathrm{d}\hat{X}_{t}^{n,-k\tau} &= -A  \hat{X}_{t}^{n,-k\tau} +  S\big(t-\Lambda(t)\big) f_n\big(\Lambda(t),   \bar{X}_{t}^{n,-k\tau} \big)\mathrm{d}t\\
     &\quad + 
     S\big(t-\Lambda(t)\big) g_n\big(\Lambda(t),\bar{X}_{t}^{n,-k\tau} \big)\mathrm{d}W(t).
  \end{split}
\end{align}
In Section \ref{sec:numerical}, we show the uniform boundedness of $\hat{X}_{t}^{n,-k\tau}$ by imposing another assumptions on $f$ and $g$:
\begin{assumption}[Dissipative condition]\label{as:dissipative}It holds that
$$2( f(t,u_1) - f(t,u_2),u_1-u_2 )+\|g(t,u_1)-g(t,u_2)\|^2_{\mathcal{L}^2_0}\leq -C_{f,g} \| u_1-u_2\|^2$$ 
 for all $u,u_1, u_2 \in H$ and $t \in [0,\tau)$.
\end{assumption}

\begin{assumption}
  \label{as:lambda2}
$L_{f,g}<2\lambda_1,\ \frac{C_f}{\lambda_1}+\frac{C_g}{\sqrt{\lambda_1}}<1$.
\end{assumption}
We conclude the random periodicity of the spatio-temporal discrete scheme \eqref{eq:con_RandM} in Theorem \ref{thm:main2} and determine a uniform and strong order to approximate $X^{-k\tau}_{\cdot}(\xi)$ from \eqref{eq:con_RandM} in Theorem \ref{thm:error}. Compared to the convergence in SDE cases in \cite{rpsnumerics2017, wu2021}, for the SEE case it is required that the approximation trajectory starting from $L^{2}(\Omega;
  \dot{H}^r)$ with $r\in (0,1)$ rather than an arbitrary starting point in $L^{2}(\Omega; H)$, which guarantees the continuity of the path $X^{-k\tau}_{\cdot}(\xi)$. An interesting observation from it is, the order of convergence directly depends on the space where the initial point lives on, ie, $L^{2}(\Omega;\dot{H}^r)$. As the rate of the convergence from $X^{-k\tau}_{\cdot}(\xi)$ to the random periodic path $X^{*}_{\cdot}$
is dependent of the initial condition, we end up this paper with Corollary \ref{corr:error} which determines a strong but not optimal order for approximating $X^{*}_{\cdot}$. Corollary \ref{corr:error} also implies the best order of convergence can be ever achieved is $1/2-\epsilon$ with arbitrarily small $\epsilon$.

\section{Preliminaries}
In this section we present a few useful mathematical tools for later use.

\begin{prop}\label{prop:semiprop}
Under the condition of the infinitesimal generator $-A$ in Assumption \ref{as:A} for the semigroup $(S(t))_{t \in [0,\infty)}$, the following properties hold:
\begin{enumerate}
    \item For any $\nu \in [0,1]$, there exists a positive constant $C_1(\nu)$ such that
    \begin{equation}\label{eqn:Adiff}
        \|A^{-\nu}(S(t)-\text{Id})\|_{\mathcal{L}(H)}\leq C_1(\nu) t^\nu \quad \text{for }t\geq 0,
    \end{equation}
    where $\text{Id}$ is the identity map from $H$ to $H$.
    In addition,
    \begin{equation}\label{eqn:Ainverse}
    \|A^{-\nu}\|_{\mathcal{L}(H)}\leq \lambda_{1}^{-\nu}.
    \end{equation}
    \item  For any $\mu \geq 0$, there exists a positive constant $C_2(\mu)$ such that
    \begin{equation}\label{eqn:Aestimate}
        \|A^{\mu}S(t)\|_{\mathcal{L}(H)}\leq C_2(\mu) t^{-\mu} \quad \text{for }t> 0.
    \end{equation}
    \item For any $t\geq0$,  $ \|S(t)\|_{\mathcal{L}(H)}\leq e^{-\lambda_{1} t}$.  For the orthogonal projection $P_n$, it holds that
      \begin{equation}\label{eqn:Sdiffestimate}
        \|S(t)(\text{Id}-P_n)\|_{\mathcal{L}(H)}\leq e^{-\lambda_{n+1} t}, \quad \text{for }t\geq 0.
    \end{equation}
\end{enumerate}
\end{prop}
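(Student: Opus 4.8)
The plan is to reduce all three estimates to one-dimensional bounds via the spectral decomposition of $A$. Since $A$ is self-adjoint and positive definite with the orthonormal eigenbasis $(e_j)_{j\in\mathbb{N}}$ guaranteed by Assumption~\ref{as:A}, the semigroup and every (fractional) power of $A$ act diagonally: $S(t)e_j = e^{-\lambda_j t} e_j$ and $A^{\rho} e_j = \lambda_j^{\rho} e_j$ for any $\rho \in \mathbb{R}$. Hence, for $x = \sum_j (x,e_j)\,e_j \in H$, the three operators in question act on the $j$-th coordinate by the scalar multipliers $\lambda_j^{-\nu}(e^{-\lambda_j t}-1)$, $\lambda_j^{\mu} e^{-\lambda_j t}$, and (for $j>n$) $e^{-\lambda_j t}$, respectively. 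By Parseval's identity the operator norm of each equals the supremum over $j$ (equivalently over $\lambda \ge \lambda_1$) of the absolute value of the associated multiplier, so it only remains to maximize three elementary functions of a single real variable.

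For \eqref{eqn:Adiff} I would invoke the elementary inequality $1 - e^{-x} \le x^{\nu}$, valid for all $x\ge 0$ and $\nu\in[0,1]$; this follows from $1-e^{-x}\le \min(1,x)$ together with $\min(1,x)\le x^{\nu}$ on $[0,\infty)$ (treating $x\le 1$ and $x\ge 1$ separately). Taking $x=\lambda_j t$ gives $\lambda_j^{-\nu}\,|e^{-\lambda_j t}-1| \le \lambda_j^{-\nu}(\lambda_j t)^{\nu} = t^{\nu}$, so in fact $C_1(\nu)=1$ suffices. The bound \eqref{eqn:Ainverse} is immediate since $\lambda_j^{-\nu}\le \lambda_1^{-\nu}$, using that $(\lambda_j)$ is nondecreasing. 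For \eqref{eqn:Aestimate} I would write $\lambda_j^{\mu} e^{-\lambda_j t} = t^{-\mu}(\lambda_j t)^{\mu} e^{-\lambda_j t}$ and maximize $\phi(x)=x^{\mu}e^{-x}$ over $x\ge 0$; its maximum is attained at $x=\mu$, yielding $C_2(\mu)=\mu^{\mu}e^{-\mu}$ (with $C_2(0)=1$). Finally, for \eqref{eqn:Sdiffestimate} the projection $\mathrm{Id}-P_n$ annihilates the first $n$ coordinates, so only multipliers $e^{-\lambda_j t}$ with $j\ge n+1$ survive, and each is bounded by $e^{-\lambda_{n+1}t}$ because $\lambda_j\ge \lambda_{n+1}$ for $j>n$.

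Assembling these coordinatewise bounds through Parseval delivers the claimed operator-norm estimates. I do not expect a genuine obstacle: every item is a diagonalization followed by a routine calculus estimate on a scalar function. The only step demanding a little care is the scalar inequality $1-e^{-x}\le x^{\nu}$ in part~(1), where the exponent $\nu$ must be recovered by splitting into the regimes $x\le 1$ and $x\ge 1$; the remainder is a direct consequence of the spectral theorem for $A$.
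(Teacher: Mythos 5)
Your proof is correct, and for item (3) it coincides exactly with the paper's argument (Parseval's identity plus $\lambda_j \ge \lambda_{n+1}$ for $j > n$); but for items (1) and (2) you take a genuinely different and more self-contained route. The paper does not prove \eqref{eqn:Adiff}, \eqref{eqn:Ainverse}, \eqref{eqn:Aestimate} at all: it cites Pazy's monograph on semigroups, where these bounds are established for general analytic semigroups generated by sectorial operators via functional-calculus/contour-integral arguments. You instead exploit the extra structure in Assumption~\ref{as:A} (self-adjointness and compact inverse, hence an orthonormal eigenbasis on which $S(t)$ and $A^{\rho}$ act diagonally) to reduce everything to scalar maximizations; your elementary inequalities are all sound, namely $1-e^{-x} \le \min(1,x) \le x^{\nu}$ for $\nu \in [0,1]$, $\lambda_j^{-\nu} \le \lambda_1^{-\nu}$, and $\sup_{x \ge 0} x^{\mu}e^{-x} = \mu^{\mu}e^{-\mu}$. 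What your approach buys is explicitness and transparency: you get concrete constants $C_1(\nu) = 1$ and $C_2(\mu) = \mu^{\mu}e^{-\mu}$, which a citation cannot provide, and the whole proposition becomes verifiable in a few lines. What the paper's citation buys is generality: the Pazy estimates do not require $A$ to be self-adjoint or to have discrete spectrum, so that route would survive a weakening of Assumption~\ref{as:A}, whereas your diagonalization would not. One cosmetic caveat: the paper defines $A^{r/2}$ only for $r \in [-1,1]$, while your argument uses powers $A^{-\nu}$ with $\nu$ up to $1$ and $A^{\mu}$ with arbitrary $\mu \ge 0$; the spectral definition \eqref{eqn:fractional_r} extends verbatim to these exponents, but it would be worth one sentence to say so.
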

\begin{proof}
The proof for the first two inequalities can be found in \cite{pazy1983}. For the last one, note for any $x\in H$, we have the decomposition $x=\sum_{i=1}^\infty (x,e_i)e_i$. Clearly $S(t)(\text{Id}-P_n)$ is a linear operator from $H$ to $H$. Then the induced norm (indeed we consider its square for convenience) for it is therefore 
\begin{align*}
    \|S(t)(\text{Id}-P_n)\|^2_{\mathcal{L}(H)}&=\sup_{x\in H, \|x\|=1} \|S(t)(\text{Id}-P_n)x\|^2=\sup_{x\in H, \|x\|=1} \sum_{i=n+1}^\infty e^{-2\lambda_i t}(x,e_i)^2 \\
    &\leq e^{-2\lambda_{n+1} t}\sup_{x\in H, \|x\|=1} \sum_{i=1}^\infty (x,e_i)^2 \leq e^{-2\lambda_{n+1} t} \|x\|^2.
\end{align*}
\end{proof}

As one of the main tools, Gamma function is presented:
\begin{align}\label{eqn:gamma}
    \Gamma(\gamma):=\int_0^\infty x^{\gamma-1}e^{-x} \mathrm{d}x<\infty \ \ \text{for\ } \gamma>0.
\end{align}
\section{Existence and uniqueness of random periodic solution}\label{sec:existence}
In the following, we will show the boundedness of the solution to SEE \eqref{eq:SPDE} and characterize its dependence on the initial condition, both of which are crucial ingredients for the existence of random periodic solutions. The proof simply follows Lemma 3.1 and Lemma 3.2 in \cite{wu2021}.  
\begin{lemma}\label{lem:boundedness}
For SEE \eqref{eq:SPDE} with the given initial condition $\xi$ and satisfying Assumption \ref{as:A} to Assumption \ref{as:lambda1}, we have
\begin{equation} \label{eqn:boundedness}
    \sup_{k\in \mathbb{N}}\sup_{t>-k\tau}\mathbb{E}[\|X_{t}^{-k\tau}(\xi)\|^2]<\infty.
\end{equation}
If, in addition, $\xi\in L^{2}(\Omega, \mathcal{F}_{-k\tau}, \mathbb{P};
  \dot{H}^r)$ for some $r\in (0,1)$, then the mild solution $X_{t}^{-k\tau}(\xi)$ introduced in \eqref{eq:mild} is well defined in $ L^{2}(\Omega, \mathcal{F}_{-k\tau},\mathbb{P};
   \dot{H}^r)$ for any $k\in \mathbb{N}$, and $t>-k\tau$.
\end{lemma}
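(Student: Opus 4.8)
The plan is to establish the two assertions separately: the uniform $L^{2}(\Omega;H)$ bound \eqref{eqn:boundedness} by an energy (Itô) argument that exploits the dissipativity of the drift, and the $\dot{H}^r$ regularity by inserting $A^{\frac r2}$ into the variation-of-constants formula \eqref{eq:mild} and estimating the three resulting terms with the smoothing bound \eqref{eqn:Aestimate}. Write $X:=X_t^{-k\tau}(\xi)$ for brevity.

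For \eqref{eqn:boundedness} I would apply the Itô formula to $t\mapsto\|X\|^2$. The key observation is that the linear-growth bound \eqref{eq3:linear_f} alone is too weak here, since using it would require the unavailable condition $\hat{C}_f<\lambda_1$; one must instead use the one-sided (monotonicity) estimate. Testing the drift against $X$ gives $\langle -AX,X\rangle=-\|A^{\frac12}X\|^2\le-\lambda_1\|X\|^2$ by Assumption \ref{as:A}, while the third inequality of Assumption \ref{as:f} with $u_2=0$ yields $\langle f(t,X),X\rangle\le-C_f\|X\|^2+\|f(t,0)\|\,\|X\|$, where $\sup_t\|f(t,0)\|<\infty$ by periodicity and \eqref{eq3:linear_f}. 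Taking expectations (the stochastic integral is a local martingale, removed by a standard localization) and adding the trace term $\|g(t)\|_{\mathcal{L}^2_0}^2\le C_g^2$ from Assumption \ref{as:g}, a Young inequality on the cross term produces $\frac{\mathrm d}{\mathrm dt}\mathbb E\|X\|^2\le-c\,\mathbb E\|X\|^2+C$ with $c>0$ and $C$ independent of $k$ and $t$. Gronwall's lemma then gives $\mathbb E\|X\|^2\le e^{-c(t+k\tau)}C_\xi^2+C/c$, and since $t+k\tau>0$ the prefactor is at most $1$, so the bound is uniform over all $k\in\mathbb N$ and $t>-k\tau$. I expect the main obstacle to be rigour rather than computation: a mild solution need not lie in $\mathrm{dom}(A)$, so $\langle AX,X\rangle$ is only formal. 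I would justify the identity in the variational sense on the Gelfand triple $\dot{H}^1\hookrightarrow H\hookrightarrow\dot{H}^{-1}$, where under the present assumptions the mild solution coincides with the variational one, or equivalently through a Yosida approximation $A_\varepsilon=A(\mathrm{Id}+\varepsilon A)^{-1}$ followed by $\varepsilon\to0$; some care is needed to do this \emph{directly} for $X$, because the Remark deduces the finite-dimensional bound from \eqref{eqn:boundedness} and not conversely.

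For the second assertion, assume $\xi\in L^{2}(\Omega;\dot{H}^r)$ and apply $A^{\frac r2}$ to \eqref{eq:mild} with $t_0=-k\tau$. Since $S$ commutes with $A^{\frac r2}$ and is a contraction \eqref{eq:stab_S}, the initial term obeys $\mathbb E\|A^{\frac r2}S(t+k\tau)\xi\|^2\le\mathbb E\|\xi\|_r^2<\infty$. For the deterministic convolution I would factor $A^{\frac r2}S(\sigma)=\big(A^{\frac r2}S(\tfrac\sigma2)\big)S(\tfrac\sigma2)$ and combine \eqref{eqn:Aestimate} with $\|S(\sigma)\|_{\mathcal{L}(H)}\le e^{-\lambda_1\sigma}$ (a consequence of the spectral decomposition in Assumption \ref{as:A}) to get $\|A^{\frac r2}S(\sigma)\|_{\mathcal{L}(H)}\le C\sigma^{-\frac r2}e^{-\frac{\lambda_1}{2}\sigma}$; Minkowski's integral inequality together with \eqref{eq3:linear_f} and the bound \eqref{eqn:boundedness} just proved then reduces this term to the finite integral $\int_0^\infty\sigma^{-\frac r2}e^{-\frac{\lambda_1}{2}\sigma}\,\mathrm d\sigma$, which converges because $\frac r2<1$. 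The delicate term is the stochastic convolution: by the Itô isometry its squared $L^2(\Omega;H)$-norm equals $\int_{-k\tau}^t\|A^{\frac r2}S(t-s)g(s)\|_{\mathcal{L}^2_0}^2\,\mathrm ds$, which the same smoothing estimate and Assumption \ref{as:g} bound by $C_g^2C\int_0^\infty\sigma^{-r}e^{-\lambda_1\sigma}\,\mathrm d\sigma=C_g^2C\,\lambda_1^{r-1}\Gamma(1-r)$ via \eqref{eqn:gamma}. This is finite \emph{precisely because} $r<1$, which is exactly where the hypothesis $r\in(0,1)$ is essential and which I expect to be the main obstacle of this part. Summing the three uniform-in-$(k,t)$ bounds shows $X_t^{-k\tau}(\xi)\in L^{2}(\Omega,\mathcal F_{-k\tau},\mathbb P;\dot{H}^r)$ for every $k\in\mathbb N$ and $t>-k\tau$.
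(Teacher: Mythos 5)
Your proof is correct and follows essentially the same route as the paper: for the uniform $H$-bound the paper simply cites Lemma 3.1 of \cite{wu2021}, which is exactly the It\^o--Gronwall dissipativity argument you spell out (including the need to make $\langle AX,X\rangle$ rigorous via approximation), and for the $\dot H^r$ assertion the paper likewise inserts $A^{\frac{r}{2}}$ into the mild formula \eqref{eq:mild}, splits the semigroup with $\theta=\tfrac12$, and combines the smoothing estimate \eqref{eqn:Aestimate} with the Gamma function \eqref{eqn:gamma}, just as you do. The only notable difference is that the paper's calculation invokes Assumption \ref{as:S} (decay rate $\alpha$) although it is not among the lemma's stated hypotheses, whereas your bound $\|S(t)\|_{\mathcal{L}(H)}\le e^{-\lambda_1 t}$, derived directly from Assumption \ref{as:A}, keeps the argument within the assumptions actually listed.
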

\begin{proof} The fist assertion follows Lemma 3.1 in \cite{wu2021}. It remains to justify the second assertion, by bounding each term of \eqref{eq:mild} in $ L^{2}(\Omega, \mathcal{F}_{-k\tau},\mathbb{P};
  \dot{H}^r)$ with some constant independent of $k$ and $t$.  For the first term on the right hand side of \eqref{eq:mild}, we have   
\begin{align*}
    \mathbb{E}[\|A^{\frac{r}{2}} S(t+k\tau)\xi\|^2] =  \mathbb{E}[\|S(t+k\tau)A ^{\frac{r}{2}}\xi\|^2] \leq \mathbb{E}[\|A^{\frac{r}{2}} \xi\|^2].
\end{align*}
To bound the second term on the right hand side of \eqref{eq:mild}, we apply the linear growth of $f$ in \eqref{eq3:linear_f},  Proposition \ref{prop:semiprop} and \eqref{eqn:boundedness}, and take $\theta=\frac{1}{2}$ as follows:
\begin{align*}
 &\mathbb{E}\big[ \big\|A^{\frac{r}{2}}\int_{-k\tau}^t S(t - s) f(s,X^{-k\tau}_s) \mathrm{d}{s}\big\|^2\big]\\
 &= \mathbb{E}\big[ \big\|\int_{-k\tau}^t A^{\frac{r}{2}}S\big(\theta(t-s)\big)S\big((1-\theta)(t-s)\big) f(s,X^{-k\tau}_s) \mathrm{d}{s}\big\|^2\big]\\
 &\leq 2L_{f,g}^2 \big(1+\sup_{k\in \mathbb{N}}\sup_{s\geq -k\tau}\mathbb{E}[\|X^{-k\tau}_s\|^2]\big)\Big(\int_{-k\tau}^t \|A^{\frac{r}{2}}S\big(\theta(t-s)\big)\|_{\mathcal{L}(H)}\|S\big((1-\theta)(t-s)\big)\| \mathrm{d}{s}\Big)^2\\
 &\leq 2L_{f,g}^2 C_2\big(\frac{r}{2}\big)^2 \big(1+\sup_{k\in \mathbb{N}}\sup_{s\geq -k\tau}\mathbb{E}[\|X^{-k\tau}_s\|^2]\big)\Big(\int_{-k\tau}^t \big(\theta(t-s)\big)^{-\frac{r}{2}} e^{-{\lambda_1}  (1-\theta)(t-s)} \mathrm{d}{s}\Big)^2\\
 &\leq  2L_{f,g}^2 C_2\big(\frac{r}{2}\big)^2 \big(1+\sup_{k\in \mathbb{N}}\sup_{s\geq -k\tau}\mathbb{E}[\|X^{-k\tau}_s\|^2]\big)\lambda_1 ^{r-2}\frac{\Gamma\big(1-\frac{r}{2}\big)^2}{4},
\end{align*}
where we make use of the definition of Gamma function \eqref{eqn:gamma} and $\theta=\frac{1}{2}$ to get
\begin{align*}
    \int_{-k\tau}^t \big(\theta(t-s)\big)^{-\frac{r}{2}} e^{-\lambda_1 (1-\theta)(t-s)} \mathrm{d}{s}=\int_{0}^{t+k\tau} (\theta s)^{-\frac{r}{2}} e^{-\lambda_1 (1-\theta)s} \mathrm{d}{s}\leq \lambda_1 ^{\frac{r}{2}-1}\frac{\Gamma\big(1-\frac{r}{2}\big)}{2}.
\end{align*}
It remains to estimate the last term of \eqref{eq:mild}. To achieve it, we shall apply the It\^o isometry, the linear growth of $g$ in \eqref{eq3:linear_f},  and Proposition \ref{prop:semiprop} together with the technique involving Gamma function above: 
\begin{align*}
 &\mathbb{E}\big[ \big\|A^{\frac{r}{2}}\int_{-k\tau}^t S(t-s) g(s,X^{-k\tau}_s)
  \mathrm{d}{W(s)}\big\|^2\big]\\
  &\leq 2L_{f,g}^2  \big(1+\sup_{k\in \mathbb{N}}\sup_{s\geq -k\tau}\mathbb{E}[\|X^{-k\tau}_s\|^2]\big)\int_{-k\tau}^t \|A^{\frac{r}{2}}S(t-s)\|^2
  \mathrm{d}s\\
  &\leq  2L_{f,g}^2  \big(1+\sup_{k\in \mathbb{N}}\sup_{s\geq -k\tau} \mathbb{E}[\|X^{-k\tau}_s\|^2]\big)(2{\lambda_1})^{r-1}\frac{\Gamma\big(1-r\big)}{2}.
\end{align*}
\end{proof}
\begin{remark}
As in \cite{wu2021}, it suffices to show \eqref{eqn:boundedness} through a weaker condition on $f,g$ than the linear growth \eqref{eq3:linear_f} there exists a constant $\hat{L}_{f,g}$ such that $2(u,f(t,u))+\|g(t,u)\|_{L^2_0}\leq \hat{L}_{f,g}(1+\|u\|^2)$, for $t\in \mathbb{R}$ and $u\in H$.  
\end{remark}

\begin{lemma}\label{lem:stable}
Assume Assumption \ref{as:A} to Assumption \ref{as:lambda1}. Denote by $X_t^{-k\tau}$ and $Y_t^{-k\tau}$ two solutions of SPDE \eqref{eq:SPDE} with different initial values $\xi$ and $\eta$. Then for every $\epsilon>0$, there exists a $t\geq -k\tau$ such that 
\begin{equation}\label{eqn:stable1}
    \mathbb{E}[\|X_{\tilde{t}}^{-k\tau}-Y_{\tilde{t}}^{-k\tau}\|^2]<\epsilon
\end{equation}

whenever $\tilde{t}\geq t$.
\end{lemma}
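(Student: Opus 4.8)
The plan is to study the difference process $Z_t := X_t^{-k\tau} - Y_t^{-k\tau}$ and show that it decays exponentially in $t$, after which the claim follows by choosing the threshold large enough. The crucial structural observation is that the noise in \eqref{eq:SPDE} is additive and both solutions are driven by the \emph{same} Wiener process $W$ through the \emph{same} coefficient $g$; hence the stochastic convolutions in the mild representation \eqref{eq:mild} of $X^{-k\tau}$ and $Y^{-k\tau}$ are pathwise identical and cancel upon subtraction. Consequently $Z$ obeys the noise-free (pathwise) integral equation
\begin{equation*}
  Z_t = S(t+k\tau)(\xi-\eta) + \int_{-k\tau}^t S(t-s)\big[f(s,X_s^{-k\tau}) - f(s,Y_s^{-k\tau})\big]\,\mathrm{d}s,
\end{equation*}
valid for a.e.\ $\omega$, which is precisely why the assertion \eqref{eqn:stable1} can be formulated pathwise rather than in expectation.

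Next I would take norms and exploit two ingredients. First, because $A$ is self-adjoint and positive definite with smallest eigenvalue $\lambda_1$ (Assumption~\ref{as:A}), the spectral representation $S(t)=\sum_i e^{-\lambda_i t}(\cdot,e_i)e_i$ yields the sharp decay $\|S(t)\|_{\mathcal{L}(H)}\le e^{-\lambda_1 t}$ — the very computation behind \eqref{eqn:Sdiffestimate} — so that no separate exponential-stability hypothesis on the semigroup is needed at this stage. Second, the global Lipschitz bound of Assumption~\ref{as:f} gives $\|f(s,X_s^{-k\tau})-f(s,Y_s^{-k\tau})\|\le C_f\|Z_s\|$. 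Combining these produces the scalar integral inequality
\begin{equation*}
  \|Z_t\| \le e^{-\lambda_1(t+k\tau)}\|\xi-\eta\| + C_f\int_{-k\tau}^t e^{-\lambda_1(t-s)}\|Z_s\|\,\mathrm{d}s.
\end{equation*}

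Finally I would strip off the convolution weight by setting $u(t):=e^{\lambda_1(t+k\tau)}\|Z_t\|$, which converts the inequality into $u(t)\le \|\xi-\eta\| + C_f\int_{-k\tau}^t u(s)\,\mathrm{d}s$; Grönwall's lemma then gives $u(t)\le \|\xi-\eta\|\,e^{C_f(t+k\tau)}$, that is
\begin{equation*}
  \|Z_t\| \le \|\xi-\eta\|\, e^{-(\lambda_1-C_f)(t+k\tau)}.
\end{equation*}
Since $C_f<\lambda_1$ by Assumption~\ref{as:lambda1}, the right-hand side tends to $0$ as $t\to\infty$, and for a prescribed $\epsilon>0$ one simply selects $t$ so that $\|\xi-\eta\|\,e^{-(\lambda_1-C_f)(t+k\tau)}<\sqrt{\epsilon}$ to conclude \eqref{eqn:stable1}. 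I expect the only genuinely delicate points to be the justification of the pathwise cancellation of the stochastic convolutions and the attendant remark that the threshold $t$ may depend on $\omega$ through $\|\xi-\eta\|$; everything afterwards is a routine weighted Grönwall estimate. An alternative route is the energy method, differentiating $\|Z_t\|^2$ and invoking the dissipativity $\langle f(t,u_1)-f(t,u_2),u_1-u_2\rangle\le -C_f\|u_1-u_2\|^2$ together with $\langle A z,z\rangle\ge \lambda_1\|z\|^2$; however, making $\langle A Z_t,Z_t\rangle$ rigorous for a mild solution would require a Galerkin or Yosida regularization, so the mild-formulation Grönwall argument above seems cleaner and already delivers the needed decay using only the Lipschitz bound and the spectral gap $\lambda_1-C_f>0$.
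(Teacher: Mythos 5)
Your proof is correct, but it takes a genuinely different route from the paper's. The paper in fact offers no self-contained argument for Lemma \ref{lem:stable}: it defers to Lemmas 3.1 and 3.2 of \cite{wu2021}, i.e.\ to the energy-type argument for monotone SDEs, whose SEE analogue appears in this paper as the proof of Lemma \ref{lem:numsoldependence}. That argument applies the It\^o/chain rule to $e^{2\lambda t}\|X_t^{-k\tau}-Y_t^{-k\tau}\|^2$ (the noise terms cancel because the noise is additive), then uses $\langle A Z,Z\rangle \ge \lambda_1\|Z\|^2$ together with the one-sided Lipschitz condition $\langle f(t,u_1)-f(t,u_2),u_1-u_2\rangle\le -C_f\|u_1-u_2\|^2$, yielding the decay $\|Z_t\|^2\le e^{-2(\lambda_1+C_f)(t+k\tau)}\|\xi-\eta\|^2$. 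You instead stay entirely at the level of the mild formulation: after the same pathwise cancellation of the stochastic convolutions, you use only the semigroup decay $\|S(t)\|_{\mathcal{L}(H)}\le e^{-\lambda_1 t}$, the global Lipschitz bound, and a weighted Gr\"onwall inequality. Each approach buys something: yours is more elementary and rigorous as stated, since $Z$ solves a deterministic integral equation with continuous paths, so no It\^o formula or Yosida/Galerkin regularization is needed (a point you correctly identify as the obstacle to the energy method for mild solutions); the price is a weaker decay rate, $e^{-(\lambda_1-C_f)(t+k\tau)}$ versus $e^{-(\lambda_1+C_f)(t+k\tau)}$, and a genuine reliance on the spectral-gap Assumption \ref{as:lambda1}, which the energy argument does not need for this particular lemma (there the dissipativity of $-A$ and of $f$ reinforce each other, and $C_f<\lambda_1$ is only needed elsewhere, e.g.\ in Lemma \ref{lem:boundedness}). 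Both conclusions suffice for the statement, and your closing caveats are apt: the cancellation of the stochastic convolutions holds on a common null set by path continuity, and the threshold $t$ may depend on $\omega$ through $\|\xi-\eta\|$, which is consistent with \eqref{eqn:stable1} being a pathwise assertion with no expectation taken.
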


The existence of the semiflow $u$ for SEE \eqref{eq:SPDE} and its continuity with respect to the initial condition, ie, $u(t,s,\cdot,\omega):H\to H$ being continuous, can be guaranteed by \cite{semiflow2008}. With Lemma \ref{lem:boundedness}, Lemma \ref{lem:stable} and Assumption \ref{as:preserve}, the existence and uniqueness of the random periodic solution to \eqref{eq:SPDE} can be shown following a similar argument in the proof of Theorem 2.4 in \cite{rpsnumerics2017}.

\begin{thm}\label{thm:main1} Under Assumption \ref{as:A} to \ref{as:preserve}, there exists a unique random periodic solution $X^{*}(t,\cdot)\in L^2(\Omega;H)$ such that the solution of \eqref{eq:SPDE} satisfies
\begin{equation}\label{eqn:lim_sde}
    \lim_{k\to \infty}\mathbb{E}[\|X^{-k\tau}_t(\xi)-X^*_t\|^2]=0.
\end{equation}
Moreover, it holds that the mild form of $X^{*}$ given in \eqref{eq:limit} is well defined in $L^2(\Omega;\dot{H}^r)$ for any $r\in (0,1)$. 
\end{thm}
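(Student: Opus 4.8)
The plan is to construct $X^*_t$ as the limit in $L^2(\Omega;H)$ of the pull-back sequence $X^{-k\tau}_t(\xi)$ and then check that it is random periodic and unique. Two ingredients are already available: Lemma~\ref{lem:boundedness} gives the uniform-in-$k$ bound $\sup_k\sup_{t>-k\tau}\mathbb{E}[\|X^{-k\tau}_t(\xi)\|^2]<\infty$, and Lemma~\ref{lem:stable} says two mild solutions driven by the same noise but issued from different data coalesce in time. The mechanism behind the latter is the additive noise in \eqref{eq:SPDE}: the difference $D_t:=X^{-k\tau}_t(\xi)-X^{-k\tau}_t(\eta)$ obeys a pathwise random evolution equation with no noise term, so the dissipativity in Assumption~\ref{as:f}, together with $\langle Au,u\rangle\ge\lambda_1\|u\|^2$ and the gap $C_f<\lambda_1$ of Assumption~\ref{as:lambda1}, yields $\tfrac{\mathrm{d}}{\mathrm{d}t}\|D_t\|^2\le -c\|D_t\|^2$ for a deterministic rate $c>0$, hence the pathwise bound $\|D_t\|^2\le e^{-c(t+k\tau)}\|\xi-\eta\|^2$. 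I will use this quantitative form rather than the qualitative statement of Lemma~\ref{lem:stable}.

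First I would prove the $L^2$-Cauchy property. For $k>l$, the semiflow identity \eqref{16} and uniqueness of mild solutions give $X^{-k\tau}_t(\xi,\omega)=X^{-l\tau}_t\big(X^{-k\tau}_{-l\tau}(\xi,\omega),\omega\big)$, so with $\eta_{k,l}:=X^{-k\tau}_{-l\tau}(\xi)$ the difference $X^{-k\tau}_t(\xi)-X^{-l\tau}_t(\xi)$ is a difference of two solutions started at time $-l\tau$ from $\eta_{k,l}$ and $\xi$. Applying the exponential bound above (with elapsed time $t+l\tau$) and taking expectations, $\mathbb{E}[\|X^{-k\tau}_t(\xi)-X^{-l\tau}_t(\xi)\|^2]\le e^{-c(t+l\tau)}\,\mathbb{E}[\|\eta_{k,l}-\xi\|^2]$, and Lemma~\ref{lem:boundedness} bounds $\mathbb{E}[\|\eta_{k,l}\|^2]$ uniformly, so the right-hand side tends to $0$ as $l\to\infty$ uniformly in $k>l$. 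Thus $(X^{-k\tau}_t(\xi))_k$ is Cauchy; denote its limit by $X^*_t\in L^2(\Omega;H)$. The same estimate with two different data shows the limit does not depend on $\xi$, and passing to the limit in \eqref{eq:mild} identifies $X^*$ with the mild representation \eqref{eq:limit}.

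Second, I would verify random periodicity. Substituting $s\mapsto s+\tau$ in the variation-of-constants formula \eqref{eq:mild} for $X^{-k\tau}_{t+\tau}$ and invoking the $\tau$-periodicity of $f$ and $g$ shows that $s\mapsto X^{-k\tau}_{s+\tau}(\xi,\omega)$ is, under the shifted noise, the mild solution issued from $\xi$ at time $-(k+1)\tau$; the Wiener-shift property of Assumption~\ref{as:preserve} matches that shifted noise with $W$ evaluated at $\theta_\tau\omega$. Uniqueness then gives the identity $X^{-k\tau}_{t+\tau}(\xi,\omega)=X^{-(k+1)\tau}_t(\xi,\theta_\tau\omega)$. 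Letting $k\to\infty$ and using that $\theta_\tau$ is $\mathbb{P}$-preserving (so that the $L^2$-convergence of the first step survives under $\theta_\tau$) gives $X^*_{t+\tau}(\omega)=X^*_t(\theta_\tau\omega)$, which is the periodicity relation of Definition~\ref{feng-zhao1}.

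Finally, for uniqueness: any random periodic solution $Y^*$ is its own pull-back, $Y^*_t=X^{-k\tau}_t(Y^*_{-k\tau})$, and periodicity together with the measure-preservation of $\theta$ bounds $\mathbb{E}[\|Y^*_{-k\tau}\|^2]$ uniformly in $k$; the initial-condition independence established above then forces $Y^*_t=\lim_k X^{-k\tau}_t(Y^*_{-k\tau})=X^*_t$. The step I expect to be the main obstacle is the rigorous treatment of the noise shift in the periodicity argument: one must carefully identify the stochastic integral driven by $W(\cdot+\tau)$ with the integral driven by $W$ at the shifted sample point $\theta_\tau\omega$, and then justify the pointwise-in-$\omega$ passage to the limit even though the Cauchy step only delivers convergence in $L^2$ --- most cleanly by extracting an almost surely convergent subsequence or by transporting the $L^2$ limit through the measure-preserving map $\theta_\tau$.
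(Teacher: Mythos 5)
Your proposal is correct and follows essentially the same route as the paper: the paper proves Theorem~\ref{thm:main1} by combining Lemma~\ref{lem:boundedness} (uniform boundedness of the pull-back), Lemma~\ref{lem:stable} (coalescence of solutions from different initial data, which is exactly your pathwise exponential contraction) and Assumption~\ref{as:preserve}, and then invokes the pull-back argument of Theorem 2.4 in \cite{rpsnumerics2017}. Your write-up simply makes that cited argument explicit --- the semiflow/Cauchy step, the Wiener-shift identification of random periodicity, and the initial-condition-independence step for uniqueness are precisely the ingredients of that proof.
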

\begin{proof} It remains to show the second assertion. The first conclusion of Theorem \ref{thm:main1} ensures that for any $\epsilon$, there exists a $K(t)\in \mathbb{N}$ such that 
$\mathbb{E}[\|X^{-k\tau}_t(\xi)-X^*_t\|^2]<\epsilon$ for any $k \geq K(t)$.
Then
\begin{align*}
&\limsup_{t \in[0,\pi]}\mathbb{E}[\|X^*_t\|^2]=\limsup_{t \in[0,\pi]}\mathbb{E}[\|X^*_t-X^{-k\tau}_t(\xi)+X^{-k\tau}_t(\xi)\|^2]\\
&\leq \sup_{k\in \mathbb{N}}\sup_{t\in [0,\pi]} 2\mathbb{E}[\|X^{-k\tau}_t(\xi)\|^2]+\limsup_t \lim_{k\geq K(t)}2\mathbb{E}[\|X^{-k\tau}_t(\xi)-X^*_t\|^2]\\
&< \sup_{k\in \mathbb{N}}\sup_{t\in [0,\pi]} 2\mathbb{E}[\|X^{-k\tau}_t(\xi)\|^2]+2\epsilon.
\end{align*}
Because $\epsilon$ is arbitrary, then $\limsup_{t \in[0,\pi]}\mathbb{E}[\|X^*_t\|^2]\leq \sup_{k\in \mathbb{N}}\sup_{t\in [0,\pi]} 2\mathbb{E}[\|X^{-k\tau}_t(\xi)\|^2]$.

Due to the random periodicity of $X^{*}$ and the measure preserving property of $\theta$, it holds that
$$\limsup_{t \in[\pi,2\pi]}\mathbb{E}[\|X^*_t(\cdot)\|^2]=\limsup_{t \in[\pi,2\pi]}\mathbb{E}[\|X^*_{t-\pi}(\theta_\pi \cdot)\|^2]=\limsup_{t \in[\pi,2\pi]}\mathbb{E}[\|X^*_{t-\pi}( \cdot)\|^2]=\limsup_{t \in[0,\pi]}\mathbb{E}[\|X^*_{t}( \cdot)\|^2].$$ Similarly $\limsup_{t \in[-\pi,0]}\mathbb{E}[\|X^*_t\|^2]=\limsup_{t \in[0,\pi]}\mathbb{E}[\|X^*_t\|^2]$. Thus by induction, $\limsup_{t \in \mathbb{R}}\mathbb{E}[\|X^*_t\|^2]<\infty$. Then following the same approach in the proof of Lemma \ref{lem:boundedness}, we can deduce that the mild form of $X^*_t$ is in $L^2(\Omega; \dot{H}^r)$ for any $r\in (0,1)$. 
\end{proof}

The second conclusion of Theorem \ref{thm:main1} claims that the $X^*$ lives in an intersection space of $L^2(\Omega;\dot{H}^r)$, which is much smaller than $L^2(\Omega;H)$. Note that the first conclusion of Theorem \ref{thm:main1} shows the convergence is regardless of the initial condition $\xi$, that is,  $X^{-k\tau}_t(\xi)$ will converge to the unique random periodic solution no matter where it starts from. This observation is crucial in that one may choose a starting point with preferred properties, for instance, the continuity shown in Lemma \ref{lem:solcontinuity}.  
\begin{lemma}\label{lem:solcontinuity}
Recall that  for a fixed $h\in (0,1)$, $\Lambda(t) :=-k\tau+jh$
when $t\in (-k\tau+jh,-k\tau+(j+1)h]$. Consider the mild solution $X^{-k\tau}_{\cdot}(\xi)$ of SEE \eqref{eq:SPDE} with given initial condition $\xi\in L^{2}(\Omega, \mathcal{F}_{-k\tau}, \mathbb{P};
  \dot{H}^r)$ for some $r\in (0,1)$ and satisfying Assumption \ref{as:A} to \ref{as:lambda1}. Then for any $\nu_1 \in \big(0,r/2\big]$, there exists a positive constant $C_X$ depending on $r$ and $\nu_1$ such that 
$$\sup_{k\in \mathbb{N}}\sup_{t\geq k\tau}\mathbb{E}[\|X_t^{-k\tau}-X_{\Lambda(t)}^{-k\tau}\|^2]\leq C_X(\nu_1,r) h^{2\nu_1}.$$
\end{lemma}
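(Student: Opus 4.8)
The plan is to estimate the difference $X_t^{-k\tau}-X_{\Lambda(t)}^{-k\tau}$ by writing both in the mild-solution form \eqref{eq:mild} and controlling each of the three resulting contributions (semigroup-on-initial-data, drift integral, stochastic integral) in $L^2(\Omega;H)$. Since $t\in(-k\tau+jh,-k\tau+(j+1)h]$, we have $0\le t-\Lambda(t)<h$, so the whole estimate reduces to bounding short-time increments. The key mechanism is to trade a factor $(t-\Lambda(t))^{\nu_1}$ — which gives the desired $h^{2\nu_1}$ after squaring — against spatial regularity, using the hypothesis $\xi\in L^2(\Omega;\dot H^r)$ together with Lemma \ref{lem:boundedness}, which ensures $\sup_{k,s}\mathbb{E}[\|X_s^{-k\tau}\|^2]<\infty$ and that the solution stays in $\dot H^r$.

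First I would handle the initial-data term. Writing $S(t+k\tau)\xi - S(\Lambda(t)+k\tau)\xi = \big(S(t-\Lambda(t))-\mathrm{Id}\big)S(\Lambda(t)+k\tau)\xi$, I would insert $A^{-\nu_1}A^{\nu_1}$ and apply \eqref{eqn:Adiff} to get $\|(S(t-\Lambda(t))-\mathrm{Id})A^{-\nu_1}\|_{\mathcal{L}(H)}\le C_1(\nu_1)(t-\Lambda(t))^{\nu_1}$, leaving a factor $\|A^{\nu_1}S(\Lambda(t)+k\tau)\xi\|$; since $\nu_1<r/2$, this is controlled by $\|A^{r/2}\xi\|$ uniformly in $k$ via the contraction \eqref{eq:stab_S} (as in the first estimate of Lemma \ref{lem:boundedness}). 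Squaring and taking expectations yields a bound of order $(t-\Lambda(t))^{2\nu_1}\le h^{2\nu_1}$.

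Next, for the drift term I would split $\int_{-k\tau}^t S(t-s)f(s,X_s^{-k\tau})\,\mathrm{d}s - \int_{-k\tau}^{\Lambda(t)}S(\Lambda(t)-s)f(s,X_s^{-k\tau})\,\mathrm{d}s$ into a \emph{tail} piece $\int_{\Lambda(t)}^t S(t-s)f\,\mathrm{d}s$ and a \emph{mismatch} piece $\int_{-k\tau}^{\Lambda(t)}\big(S(t-s)-S(\Lambda(t)-s)\big)f\,\mathrm{d}s$. The tail is handled by the linear growth \eqref{eq3:linear_f}, the contraction bound, and H\"older over an interval of length $<h$, giving $O(h^2)$. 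The mismatch piece again uses $S(t-s)-S(\Lambda(t)-s)=(S(t-\Lambda(t))-\mathrm{Id})S(\Lambda(t)-s)$ and the factorization $A^{-\nu_1}\cdot A^{\nu_1}S(\Lambda(t)-s)$, then combines \eqref{eqn:Adiff} with \eqref{eqn:Aestimate} to produce $(t-\Lambda(t))^{\nu_1}(\Lambda(t)-s)^{-\nu_1}$, whose integral against the exponential decay from Assumption \ref{as:S} converges (a Gamma-function computation exactly as in Lemma \ref{lem:boundedness}) and contributes $O(h^{2\nu_1})$ after squaring. For the stochastic term I would apply It\^o isometry and perform the identical tail/mismatch split, now controlling the mismatch by $\int\|(S(t-\Lambda(t))-\mathrm{Id})S(\Lambda(t)-s)g(\Lambda(s))\|_{\mathcal{L}_2^0}^2\,\mathrm{d}s$, again via the $A^{-\nu_1}/A^{\nu_1}$ trick, Assumption \ref{as:g}, and the Gamma bound.

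\emph{The main obstacle} I anticipate is the mismatch integrals, where the factor $(\Lambda(t)-s)^{-\nu_1}$ is singular as $s\uparrow\Lambda(t)$; one must check the exponent $\nu_1<1$ keeps this locally integrable and that the exponential decay $e^{-\alpha(\cdot)}$ renders the integral over the infinite horizon $[-k\tau,\Lambda(t)]$ finite \emph{uniformly in $k$}, which is precisely where Assumption \ref{as:S} and the restriction $\nu_1<r/2$ become essential. Gathering the three contributions gives $\mathbb{E}[\|X_t^{-k\tau}-X_{\Lambda(t)}^{-k\tau}\|^2]\le C_X(\nu_1,r)h^{2\nu_1}$ with $C_X$ independent of $k$ and $t$, as claimed.
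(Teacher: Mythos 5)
Your proposal is correct and follows essentially the same route as the paper: the identical five-term decomposition (initial-data term, tail and mismatch drift integrals, tail and mismatch stochastic integrals), with the same $A^{-\nu_1}A^{\nu_1}$ factorization via Proposition \ref{prop:semiprop}, the splitting of the semigroup to combine the smoothing estimate with the exponential decay of Assumption \ref{as:S}, and the Gamma-function bound to control the infinite-horizon integrals uniformly in $k$. The only cosmetic difference is that the paper writes the initial-data factorization as $A^{-\nu_1}\big(S(t-\Lambda(t))-\mathrm{Id}\big)A^{-(\frac{r}{2}-\nu_1)}S(\Lambda(t)+k\tau)A^{\frac{r}{2}}\xi$, which is exactly your $\|A^{\nu_1-r/2}\|_{\mathcal{L}(H)}\le\lambda_1^{\nu_1-r/2}$ argument in one step.
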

\begin{proof} One can deduce the following expression from the mild form \eqref{eq:mild}:
\begin{align}\label{eqn:Sdecompose}
\begin{split}
      &X^{-k\tau}_t(\xi)- X^{-k\tau}_{\Lambda(t)}(\xi)\\
  &= \big(S(t-\Lambda(t))-\text{Id}\big)S(\Lambda(t)+k\tau) \xi \\
  &\quad + \int_{\Lambda(t)}^t S(t - s) f(s,X^{-k\tau}_s) \mathrm{d}{s} + \int_{-k\tau}^{\Lambda(t)} \big(S(t-\Lambda(t))-\text{Id}\big)S(\Lambda(t)-s) f(s,X^{-k\tau}_s) \mathrm{d}{s}\\
  &\quad + \int_{\Lambda(t)}^t S(t-s) g(s,X^{-k\tau}_s)
  \mathrm{d}{W(s)}+\int_{-k\tau}^{\Lambda(t)} \big(S(t-\Lambda(t))-\text{Id}\big)S(\Lambda(t)-s) g(s,X^{-k\tau}_s)
  \mathrm{d}{W(s)}.
\end{split}
\end{align}
To get the final assertion, we estimate each term on the right hand in $\mathbb{E}[\|\cdot\|^2]$. For the first term, we have that
\begin{align*}
    &\mathbb{E}\big[\big\|\big(S(t-\Lambda(t))-\text{Id}\big)S(\Lambda(t)+k\tau) \xi\big\|^2\big]\\
    &=\mathbb{E}\big[\big\|A^{-\nu_1}\big(S(t-\Lambda(t))-\text{Id}\big)A^{-(\frac{r}{2}-\nu_1)}S(\Lambda(t)+k\tau) A^{\frac{r}{2}}\xi\big\|^2\big]\\
    &\leq \|A^{-\nu_1}\big(S(t-\Lambda(t))-\text{Id}\big)\|^2_{\mathcal{L}(H)} \|A^{-(\frac{r}{2}-\nu_1)}\|^2_{\mathcal{L}(H)}\|S(\Lambda(t)+k\tau)\|^2_{\mathcal{L}(H)} \mathbb{E}[ \|A^{\frac{r}{2}}\xi\|^2]\\
    &\leq C_1(\nu_1)h^{2\nu_1}\lambda_1^{-(r-2\nu_1)}\mathbb{E}[ \|A^{\frac{r}{2}}\xi\|^2],
\end{align*}
where Proposition \ref{prop:semiprop} is applied for the last line. For the second term of \eqref{eqn:Sdecompose}, by making use of the linear growth condition on $f$ and H\"older's inequality, we can obtain
\begin{align*}
        &\mathbb{E}\big[\big\|\int_{\Lambda(t)}^t S(t - s) f(s,X^{-k\tau}_s) \mathrm{d}{s}\big\|^2\big]\leq2L_{f,g}^2 h^2\big(1+\sup_{k\in \mathbb{N}}\sup_{s\geq -k\tau} \mathbb{E}[\|X^{-k\tau}_s\|^2]\big).
\end{align*}
Similarly for the fourth term of \eqref{eqn:Sdecompose}, through the It\^o isometry we have that
\begin{align*}
        &\mathbb{E}\big[\big\|\int_{\Lambda(t)}^t S(t-s) g(s,X^{-k\tau}_s)
  \mathrm{d}{W(s)}\big\|^2\big]\\
  &=\int_{\Lambda(t)}^t \mathbb{E}\big[\|S(t-s) g(s,X^{-k\tau}_s)\|_{\mathcal{L}^2_0}\big]
  \mathrm{d}{s}\\
  &\leq 2L_{f,g}^2 h\big(1+\sup_{k\in \mathbb{N}}\sup_{s\geq -k\tau} \mathbb{E}[\|X^{-k\tau}_s\|^2]\big).
\end{align*}
For the third term of \eqref{eqn:Sdecompose}, applying Assumption \ref{as:A}, Proposition \ref{prop:semiprop}, and defining $\theta=1/2$ yield the following estimate
\begin{align}\label{eqn:Sdiffintegralestimate}
\begin{split}
  &\mathbb{E}\big[\big\|\int_{-k\tau}^{\Lambda(t)} \big(S(t-\Lambda(t))-\text{Id}\big)S(\Lambda(t)-s) f(s,X^{-k\tau}_s) \mathrm{d}{s}\big\|^2\big]\\
  &=\mathbb{E}\big[\big\|\int_{-k\tau}^{\Lambda(t)} A^{-\nu_1}\big(S(t-\Lambda(t))-\text{Id}\big)A^{\nu_1}S\big(\Lambda(t)-s\big) f(s,X^{-k\tau}_s) \mathrm{d}{s}\big\|^2\big]\\
  &\leq C_1(\nu_1)^2h^{2\nu_1}\int_{-k\tau}^{\Lambda(t)}\|A^{\nu_1}S\big(\Lambda(t)-s\big)\|_{\mathcal{L}(H)}\mathrm{d}{s}\int_{-k\tau}^{\Lambda(t)}\|A^{\nu_1}S\big(\Lambda(t)-s\big)\|_{\mathcal{L}(H)}\mathbb{E}[\| f(s,X^{-k\tau}_s)\|^2]\mathrm{d}{s}\\
  &\leq 2L_{f,g}^2 \big(1+\sup_{k\in \mathbb{N}}\sup_{s\geq -k\tau}\mathbb{E}[\|X^{-k\tau}_s\|^2]\big) C_1(\nu_1)^2h^{2\nu_1}\Big(\int_{0}^{\Lambda(t)+k\tau}\|A^{\nu_1}S(\theta s)S\big((1-\theta)s\big)\|_{\mathcal{L}(H)}\mathrm{d}{s}\Big)^2\\
  &\leq  2L_{f,g}^2 \big(1+\sup_{k\in \mathbb{N}}\sup_{s\geq -k\tau}\mathbb{E}[\|X^{-k\tau}_s\|^2]\big) C_1(\nu_1)^2h^{2\nu_1}C_2(\nu_1)^2\Big(\int_{0}^{\Lambda(t)+k\tau}(\theta s)^{-\nu_1}e^{-\lambda_1 (1-\theta)s}\mathrm{d}{s}\Big)^2\\ 
  &\leq 2L_{f,g}^2 \big(1+\sup_{k\in \mathbb{N}}\sup_{s\geq -k\tau}\mathbb{E}[\|X^{-k\tau}_s\|^2]\big) C_1(\nu_1)^2h^{2\nu_1}C_2(\nu_1)^2\frac{\lambda_1^{2(\nu_1-1)} \Gamma(1-\nu_1)^2}{4}, 
  \end{split}
\end{align}
where we change variable to deduce the integral in the fourth line and apply the Gamma function \eqref{eqn:gamma} to get the last line.

For the last term of \eqref{eqn:Sdecompose}, using the It\^o isometry, the linear growth of $g$ in \eqref{eq3:linear_f} and the definition of the Gamma function we have that
\begin{align}\label{eqn:SdiffWintegralestimate}
\begin{split}
          &\mathbb{E}\big[\big\|\int_{-k\tau}^{\Lambda(t)} \big(S(t-\Lambda(t))-\text{Id}\big)S(\Lambda(t)-s) g(s,X^{-k\tau}_s)
  \mathrm{d}{W}(s)\big\|^2\big]\\
  &=\int_{-k\tau}^{\Lambda(t)} \mathbb{E}\big[\|A^{-\nu_1}\big(S(t-\Lambda(t))-\text{Id}\big)A^{\nu_1}S(\Lambda(t)-s) g(s,X^{-k\tau}_s)\|^2_{\mathcal{L}^2_0}\big]
  \mathrm{d}{s}\\
  & \leq 2L_{f,g}^2\big(1+\sup_{k\in \mathbb{N}}\sup_{s\geq -k\tau}\mathbb{E}[\|X^{-k\tau}_s\|^2]\big)  C_1(\nu_1)^2h^{2\nu_1} \int_{0}^{\Lambda(t)+k\tau} \|A^{\nu_1}S(\theta s)S\big((1-\theta) s\big)\|^2_{\mathcal{L}(H)}\mathrm{d}{s}\\
  &\leq 2L_{f,g}^2\big(1+\sup_{k\in \mathbb{N}}\sup_{s\geq -k\tau}\mathbb{E}[\|X^{-k\tau}_s\|^2]\big)  C_1(\nu_1)^2h^{2\nu_1}
C_2(\nu_1)^2\frac{2(2{\lambda_1}^{2\nu_1-1} \Gamma(1-2\nu_1)^2}{2}.  
\end{split}
\end{align}
\end{proof}
One will see that the continuity of the true solution in Lemma \ref{lem:solcontinuity} plays an important role in later analysis.
\section{The random periodic solution of the Galerkin numerical approximation}\label{sec:numerical}
This section is devoted to the existence and uniqueness of the random periodic solution for the Galerkin-type spatio-temporal discretization defined in \eqref{eq:con_RandM}, and its convergence to the random periodic solution of our underlying SPDE \eqref{eq:SPDE}.

\begin{lemma}\label{lem:onestep_estimate}
Under Assumption \ref{as:A} to Assumption \ref{as:fg}, for the continuous version of the numerical scheme defined in \eqref{eq:con_RandM} with stepsize $h\in (0,1)$, it holds that 
\begin{align}\label{eqn:onestep_estimate}
    \mathbb{E}[\|\hat{X}_{t}^{n,-k\tau}-\bar{X}_{t}^{n,-k\tau}\|^2]\leq C_n h\big(1+\mathbb{E}\big[\big\|\bar{X}_{\Lambda(t)}^{n,-k\tau}\big\|^2\big]\big),
\end{align}
where $C_n=3(\lambda_n^2+4L_{f,g}^2)$.
\end{lemma}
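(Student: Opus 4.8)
The plan is to reduce the claim to a clean \emph{one-step} representation of the continuous scheme and then estimate the resulting terms separately, exploiting the fact that everything now lives in the finite-dimensional space $H_n$. First I would rewrite $\hat{X}_t^{n,-k\tau}$ in terms of its value at the grid point $\Lambda(t)$. Since $S(t-\Lambda(t))S(\Lambda(t)-\Lambda(s))=S(t-\Lambda(s))$ and $S(t-\Lambda(t))S(\Lambda(t)+k\tau)=S(t+k\tau)$ by the semigroup property, applying $S(t-\Lambda(t))$ to the expression \eqref{eq:con_RandM} for $\hat{X}_{\Lambda(t)}^{n,-k\tau}$ reproduces exactly the part of \eqref{eq:con_RandM} for $\hat{X}_t^{n,-k\tau}$ carried over $[-k\tau,\Lambda(t)]$. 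Subtracting, and using that $\Lambda(s)=\Lambda(t)$ and $\bar{X}_s^{n,-k\tau}=\hat{X}_{\Lambda(t)}^{n,-k\tau}$ for every $s\in(\Lambda(t),t]$, this yields
\begin{align*}
\hat{X}_t^{n,-k\tau}-\bar{X}_t^{n,-k\tau}
&=\big(S(t-\Lambda(t))-\mathrm{Id}\big)\hat{X}_{\Lambda(t)}^{n,-k\tau}\\
&\quad+(t-\Lambda(t))\,S(t-\Lambda(t))\,f_n\big(\Lambda(t),\hat{X}_{\Lambda(t)}^{n,-k\tau}\big)\\
&\quad+S(t-\Lambda(t))\,g_n(\Lambda(t))\big(W(t)-W(\Lambda(t))\big),
\end{align*}
recalling $\bar{X}_t^{n,-k\tau}=\hat{X}_{\Lambda(t)}^{n,-k\tau}$. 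The point of this form is that the entire history and the initial datum have disappeared, so every term is controlled by $\hat{X}_{\Lambda(t)}^{n,-k\tau}$ alone, as required by the right-hand side of \eqref{eqn:onestep_estimate}.

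Next I would bound the three terms in $\mathbb{E}[\|\cdot\|^2]$ after $\|a+b+c\|^2\le 3(\|a\|^2+\|b\|^2+\|c\|^2)$. For the first term I use that $\hat{X}_{\Lambda(t)}^{n,-k\tau}\in H_n$: expanding in the eigenbasis and using $1-e^{-y}\le y$ gives $\|(S(\delta)-\mathrm{Id})x\|\le \lambda_n\delta\|x\|$ for $x\in H_n$, so with $\delta=t-\Lambda(t)\le h$ this term is at most $\lambda_n^2 h^2\,\mathbb{E}[\|\bar{X}_{\Lambda(t)}^{n,-k\tau}\|^2]$. For the second term I combine $\|S(t-\Lambda(t))\|_{\mathcal{L}(H)}\le 1$, $t-\Lambda(t)\le h$, $\|f_n\|\le\|f\|$ (as $f_n=P_nf$) and the linear growth \eqref{eq3:linear_f} to obtain a bound of order $h^2\,(1+\mathbb{E}[\|\bar{X}_{\Lambda(t)}^{n,-k\tau}\|^2])$. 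For the third term, rewriting it as $\int_{\Lambda(t)}^t S(t-\Lambda(t))g_n(\Lambda(t))\,\mathrm{d}W(s)$ and applying It\^o's isometry with $\|S\|_{\mathcal{L}(H)}\le 1$ and $\|g_n(\Lambda(t))\|_{\mathcal{L}^2_0}\le C_g$ gives a bound of order $C_g^2\,h$.

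Finally, since $h\in(0,1)$ implies $h^2\le h$, the three estimates combine into $C_n h(1+\mathbb{E}[\|\bar{X}_{\Lambda(t)}^{n,-k\tau}\|^2])$ with a constant $C_n$ that is a multiple of $\lambda_n^2+\hat{C}_f^2+C_g^2$; the dependence on $n$ enters only through $\lambda_n$. The step I expect to require the most care is the diffusion term: it is the only contribution of order $h$ rather than $h^2$, and therefore it alone dictates the overall rate. It is equally essential that the $n$-dependent factor $\lambda_n$ appears \emph{only} in the deterministic first term, which is legitimate precisely because the finite-dimensional operator $A_n$ is bounded on $H_n$ with norm $\lambda_n$ — this is what allows us to avoid the fractional-power machinery used in Lemma~\ref{lem:solcontinuity} and to settle for a constant depending on $n$.
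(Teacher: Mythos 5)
Your proof is correct and follows essentially the same route as the paper: both reduce to the identical three-term one-step decomposition $\big(S(t-\Lambda(t))-\mathrm{Id}\big)\bar{X}_t^{n,-k\tau}$ plus the local drift and stochastic integrals over $[\Lambda(t),t]$, then bound them by the eigenbasis estimate $\lambda_n^2h^2$, linear growth giving $h^2$, and It\^o isometry giving $C_g^2 h$. The only cosmetic difference is that you derive the decomposition explicitly via the semigroup property applied to $\hat{X}_{\Lambda(t)}^{n,-k\tau}$, whereas the paper asserts it directly from the differential form \eqref{eqn:diffform}; the estimates themselves coincide.
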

\begin{proof}
From \eqref{eqn:diffform} we get that
\begin{align}\label{eqn:onestep_full}
\begin{split}
    &\hat{X}_{t}^{n,-k\tau} -\bar{X}_{t}^{n,-k\tau} \\
   &= \big(S\big(t-\Lambda(t)\big)-\text{Id}\big) \bar{X}_{t}^{n,-k\tau}  +\int_{\Lambda(t)} ^t   S\big(t-\Lambda(s)\big) f_n\big(\Lambda(s),   \bar{X}_{s}^{n,-k\tau} \big)\mathrm{d}s\\
   &\quad + 
    \int_{\Lambda(t)} ^t   S\big(t-\Lambda(s)\big) g_n\big(\Lambda(s),   \bar{X}_{s}^{n,-k\tau} \big)\mathrm{d}W(s).
\end{split}
\end{align}
For the first term on the right hand side, we have that
\begin{align}\label{eqn:operatordiff}
\begin{split}
    &\mathbb{E}[\|\big(S\big(t-\Lambda(t)\big)-\text{Id}\big) \bar{X}_{t}^{n,-k\tau} \|^2]\\
    &=  \mathbb{E}\Big[\Big\|\sum_{i=1}^n \big(e^{-\lambda_i(t-\Lambda(t))}-1\big)\big(e_i, \bar{X}_{t}^{n,-k\tau}\big)e_i  \Big\|^2\Big]\\
    &\leq \big(e^{-\lambda_n(t-\Lambda(t))}-1\big)^2\mathbb{E}\big[\|\bar{X}_{t}^{n,-k\tau}\|^2\big]\leq \lambda_n^2 h^2 \mathbb{E}\big[\big\|\bar{X}_{\Lambda(t)}^{n,-k\tau}\big\|^2\big],
\end{split}
\end{align}
where we use the fact $(1-e^{-a})\leq a$ for $a> 0$ to derive the last inequality. 

For the second term on the right hand side of \eqref{eqn:onestep_full}, we have that
\begin{align*}
    &\mathbb{E}\Big[\Big\|\int_{\Lambda(t)} ^t   S\big(t-\Lambda(s)\big) f_n\big(\Lambda(s),   \bar{X}_{s}^{n,-k\tau} \big)\mathrm{d}s\Big\|^2\Big]\\
    &\leq  \int_{\Lambda(t)} ^t   \|S\big(t-\Lambda(s)\big)\|^2_{\mathcal{L}(H)}\mathrm{d}s \int_{\Lambda(t)} ^t    \mathbb{E}\big[\big\|f_n\big(\Lambda(s),   \bar{X}_{s}^{n,-k\tau} \big)\big\|^2\big]\mathrm{d}s\\
    &\leq 2h^2L_{f,g}^2 \big(1+ \mathbb{E}\big[\big\|\bar{X}_{\Lambda(t)}^{n,-k\tau}\big\|^2\big]\big),
\end{align*}
where we apply the H\"older inequality to deduce the second line and make use of the linear growth of $f$ to get the last line.

For the last term on the right hand side of \eqref{eqn:onestep_full},  through the It\^o isometry, Assumption \ref{as:A} and the linear growth of $g$ we have that
\begin{align*}
    &\mathbb{E}\Big[\Big\|\int_{\Lambda(t)} ^t   S\big(t-\Lambda(s)\big) g_n\big(\Lambda(s),\bar{X}_{s}^{n,-k\tau}\big)\mathrm{d}W(s)\Big\|^2\Big]\\
    &= \int_{\Lambda(t)} ^t    \mathbb{E}\big[\|S\big(t-\Lambda(s)\big)g_n\big(\Lambda(s),\bar{X}_{s}^{n,-k\tau}\big) \big\|^2_{\mathcal{L}^2_0}\big]\mathrm{d}s\\
    &\leq 2h L_{f,g}^2\big(1+ \mathbb{E}\big[\big\|\bar{X}_{\Lambda(t)}^{n,-k\tau}\big\|^2\big]\big).
\end{align*}

\end{proof}

\begin{lemma}\label{lem:boundednessG}
Under Assumption \ref{as:A} to Assumption \ref{as:fg} and Assumption \ref{as:dissipative} to Assumption \ref{as:lambda2}, let $X^{-k\tau}_{\cdot}$ be the solution of SEE \eqref{eq:SPDE} with the initial condition $\xi$ and $\hat{X}^{n,-k\tau}_{\cdot}$ from \eqref{eq:con_RandM} be its numerical simulation with the stepsize $h$ satisfying 
\begin{align}\label{eqn:hchoice}
    \big(5L_{f,g} \sqrt{h\lambda_n}(1+C_n h)+2C_f\sqrt{C_n}\big) \sqrt{h}\leq 2C_{f,g}.
\end{align}
Then it holds that
\begin{equation} 
    \sup_{k\in \mathbb{N}}\sup_{t>-k\tau}\mathbb{E}[\|\hat{X}_{t}^{n,-k\tau}(\xi)\|^2]\leq C_{\xi}^2+\frac{4(C_{f,g}+L_{f,g}+L^2_{f,g})}{2\lambda_1-L_{f,g}}.
\end{equation}
If, in addition, $\xi\in L^{2}(\Omega, \mathcal{F}_{-k\tau}, \mathbb{P};
  \dot{H}^r)$ for $r\in (0,1)$, the numerical solution introduced in \eqref{eq:con_RandM} is well defined in $ L^{2}(\Omega, \mathcal{F}_{-k\tau},\mathbb{P};
   \dot{H}^r)$ for any $k\in \mathbb{N}$, and $t>-k\tau$.
\end{lemma}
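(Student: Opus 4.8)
\section*{Proof proposal}

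The plan is to exploit the differential form \eqref{eqn:diffform}. Since the scheme \eqref{eq:con_RandM} takes values in the finite-dimensional space $H_n$, on which $A$ acts as the bounded operator $A_n$, the finite-dimensional It\^o formula applies directly to $t\mapsto\|\hat{X}^{n,-k\tau}_t\|^2$. Writing $\phi(t):=\mathbb{E}[\|\hat{X}^{n,-k\tau}_t\|^2]$ and taking expectations so that the stochastic integral drops out, I obtain
\begin{align*}
\phi'(t) = {}& -2\mathbb{E}\big[\langle \hat{X}^{n,-k\tau}_t, A\hat{X}^{n,-k\tau}_t\rangle\big] + 2\mathbb{E}\big[\langle \hat{X}^{n,-k\tau}_t, S(t-\Lambda(t))f_n(\Lambda(t),\bar{X}^{n,-k\tau}_t)\rangle\big]\\
&+ \mathbb{E}\big[\|S(t-\Lambda(t))g_n(\Lambda(t))\|_{\mathcal{L}^2_0}^2\big].
\end{align*}
The goal is to convert this into a linear differential inequality $\phi'(t)\le -(2\lambda_1-\hat{C}_f)\phi(t)+K$ with $K=2(\hat{C}_fC_f+\hat{C}_f+C_g^2)$, and then to conclude by Gronwall's inequality using $\phi(-k\tau)=\mathbb{E}[\|P_n\xi\|^2]\le\mathbb{E}[\|\xi\|^2]\le C_\xi^2$; since neither $K$ nor the exponent $2\lambda_1-\hat{C}_f>0$ (Assumption \ref{as:lambda2}) depends on $k$ or $t$, the resulting estimate $\phi(t)\le C_\xi^2+K/(2\lambda_1-\hat{C}_f)$ is automatically uniform in both.

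The three terms are handled separately. The spectral lower bound $\langle x,Ax\rangle\ge\lambda_1\|x\|^2$ gives $-2\mathbb{E}[\langle\hat{X}^{n,-k\tau}_t,A\hat{X}^{n,-k\tau}_t\rangle]\le-2\lambda_1\phi(t)$, and contractivity of $S$ with $\|g_n(\cdot)\|_{\mathcal{L}^2_0}\le\|g(\cdot)\|_{\mathcal{L}^2_0}\le C_g$ bounds the noise term by $C_g^2$. The drift term is the crux, and I would split it as
\begin{align*}
&2\langle\hat{X}^{n,-k\tau}_t,S(t-\Lambda(t))f_n(\Lambda(t),\bar{X}^{n,-k\tau}_t)\rangle\\
&= 2\langle\hat{X}^{n,-k\tau}_t,f_n(\Lambda(t),\hat{X}^{n,-k\tau}_t)\rangle + 2\langle\hat{X}^{n,-k\tau}_t,f_n(\Lambda(t),\bar{X}^{n,-k\tau}_t)-f_n(\Lambda(t),\hat{X}^{n,-k\tau}_t)\rangle\\
&\quad + 2\langle\hat{X}^{n,-k\tau}_t,(S(t-\Lambda(t))-\text{Id})f_n(\Lambda(t),\bar{X}^{n,-k\tau}_t)\rangle.
\end{align*}
On the first (dissipative) piece I would apply the one-sided Lipschitz bound of Assumption \ref{as:f} with second argument $0$, together with $\|f(\cdot,0)\|\le\hat{C}_f$ from \eqref{eq3:linear_f}, giving $-2C_f\|\hat{X}^{n,-k\tau}_t\|^2+2\hat{C}_f\|\hat{X}^{n,-k\tau}_t\|$, and then $2\hat{C}_f\|\hat{X}^{n,-k\tau}_t\|\le\hat{C}_f\|\hat{X}^{n,-k\tau}_t\|^2+\hat{C}_f$ by Young's inequality; combined with the $-A$ term this produces exactly the decisive coefficient $-(2\lambda_1-\hat{C}_f)$ in front of $\phi$ and reserves the surplus $-2C_f\|\hat{X}^{n,-k\tau}_t\|^2$ to absorb the two error pieces.

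For the two error pieces I would use the space-Lipschitz constant $C_f$ on the second and, on the third, the finite-dimensional increment bound $\|(S(\delta)-\text{Id})P_n\|_{\mathcal{L}(H)}\le\sqrt{\lambda_n\delta}$ (which follows from $1-e^{-a}\le\sqrt{a}$) with $\delta=t-\Lambda(t)\le h$, together with the linear growth \eqref{eq3:linear_f} of $f_n$. Both are then estimated by Cauchy--Schwarz and Lemma \ref{lem:onestep_estimate}, which controls $\mathbb{E}[\|\hat{X}^{n,-k\tau}_t-\bar{X}^{n,-k\tau}_t\|^2]$ by $C_nh(1+\mathbb{E}[\|\bar{X}^{n,-k\tau}_t\|^2])$, and by Young's inequalities with weights tuned so that each coefficient carries a factor $\sqrt{h}$; finally I would replace $\mathbb{E}[\|\bar{X}^{n,-k\tau}_t\|^2]$ by $\phi(t)$ plus a one-step remainder, again through Lemma \ref{lem:onestep_estimate}. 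The point of the stepsize restriction \eqref{eqn:hchoice} is precisely that the aggregate coefficient multiplying $\|\hat{X}^{n,-k\tau}_t\|^2$ from the two error pieces is then at most $2C_f$, so it is swallowed by the reserved dissipative surplus, while the leftover constants are dominated by $K$. The hard part is this bookkeeping: tracking how the $\sqrt{\lambda_n h}$ and $\sqrt{C_nh}$ factors distribute between the $\|\hat{X}^{n,-k\tau}_t\|^2$ and $\|\bar{X}^{n,-k\tau}_t\|^2$ contributions so that they reassemble into the left-hand side of \eqref{eqn:hchoice}.

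For the second assertion I would argue exactly as in the second half of Lemma \ref{lem:boundedness}, bounding each term of the mild representation \eqref{eq:con_RandM} in $L^2(\Omega;\dot{H}^r)$ uniformly in $k$ and $t$. Since $P_n$ commutes with $A^{\frac{r}{2}}$ and $\|S(t)\|_{\mathcal{L}(H)}\le1$, the initial term satisfies $\mathbb{E}[\|A^{\frac{r}{2}}S(t+k\tau)P_n\xi\|^2]\le\mathbb{E}[\|A^{\frac{r}{2}}\xi\|^2]$, while the drift and diffusion integrals are handled by inserting the factorization $A^{\frac{r}{2}}S(\cdot)=A^{\frac{r}{2}}S(\frac{1}{2}\cdot)S(\frac{1}{2}\cdot)$ and invoking Proposition \ref{prop:semiprop}, the linear growth of $f_n$, the $\mathcal{L}^2_0$-bound on $g_n$, the $L^2(\Omega;H)$-boundedness just established, and the Gamma-function identity \eqref{eqn:gamma}. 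This yields finiteness of $\sup_k\sup_t\mathbb{E}[\|A^{\frac{r}{2}}\hat{X}^{n,-k\tau}_t\|^2]$, i.e.\ well-definedness in $\dot{H}^r$.
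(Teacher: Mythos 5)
Your skeleton is essentially the paper's: apply the finite-dimensional It\^o formula to the squared norm of $\hat{X}^{n,-k\tau}_t$ via \eqref{eqn:diffform}, use $\mathbb{E}\big(\hat{X}^{n,-k\tau}_s,A\hat{X}^{n,-k\tau}_s\big)\ge\lambda_1\mathbb{E}[\|\hat{X}^{n,-k\tau}_s\|^2]$, split the drift pairing so that the one-sided Lipschitz condition of Assumption \ref{as:f} yields a dissipative term plus discretization remainders, control the remainders with Lemma \ref{lem:onestep_estimate}, absorb them using the stepsize restriction \eqref{eqn:hchoice}, and conclude with the exponent $2\lambda_1-\hat{C}_f>0$. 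Your Gronwall formulation of a differential inequality for $\phi(t)=\mathbb{E}[\|\hat{X}^{n,-k\tau}_t\|^2]$ is equivalent to the paper's device of applying It\^o's formula to $e^{2\lambda t}\|\hat{X}^{n,-k\tau}_t\|^2$ with $\lambda=\lambda_1-\hat{C}_f/2$ and integrating; your It\^o correction with coefficient $1$ is in fact the correct one (the paper carries a harmless factor $2$), and your $\sqrt{\lambda_n h}$ bound from $1-e^{-a}\le\sqrt{a}$ is the one actually reflected in \eqref{eqn:hchoice}. The treatment of the second assertion (mimicking the second half of Lemma \ref{lem:boundedness}) also coincides with the paper's.

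The one substantive divergence is \emph{where} the dissipativity is applied, and it is precisely there that your bookkeeping, as stated, will not close. The paper decomposes the drift pairing into four terms $V_1,\dots,V_4$ and applies the one-sided Lipschitz condition at the grid value: $V_3=\big(\bar{X}^{n,-k\tau}_s,f_n(\Lambda(s),\bar{X}^{n,-k\tau}_s)-f_n(\Lambda(s),0)\big)\le -C_f\|\bar{X}^{n,-k\tau}_s\|^2$, so its dissipative surplus is expressed in $\mathbb{E}[\|\bar{X}^{n,-k\tau}_s\|^2]$ --- the same quantity in which the error terms from $V_1,V_2$ are naturally bounded; only the benign forward conversion \eqref{eqn:usfulest1}, $\mathbb{E}[\|\hat{X}^{n,-k\tau}_s\|^2]\le 2(1+C_nh)\mathbb{E}[\|\bar{X}^{n,-k\tau}_s\|^2]+2C_nh$, is ever needed, and \eqref{eqn:hchoice} then makes the aggregate $\bar{X}$-coefficient nonpositive directly. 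You instead take the dissipative term at $\hat{X}^{n,-k\tau}_s$, so your surplus is $-2C_f\phi(t)$, while your two error pieces are bounded in terms of $\mathbb{E}[\|\bar{X}^{n,-k\tau}_t\|^2]$; to absorb them you must invert Lemma \ref{lem:onestep_estimate}, i.e.\ pass from $\mathbb{E}[\|\bar{X}^{n,-k\tau}_t\|^2]\le 2\phi(t)+2C_nh\big(1+\mathbb{E}[\|\bar{X}^{n,-k\tau}_t\|^2]\big)$ to $\mathbb{E}[\|\bar{X}^{n,-k\tau}_t\|^2]\le (2\phi(t)+2C_nh)/(1-2C_nh)$. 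This needs $C_nh<1/2$, which \eqref{eqn:hchoice} does not supply (it only forces $C_nh<1$), and the factor $2/(1-2C_nh)$ inflates every error coefficient, so they will not reassemble into the left-hand side of \eqref{eqn:hchoice} with threshold $2C_f$, nor will the final constant come out as $C_\xi^2+2(\hat{C}_fC_f+\hat{C}_f+C_g^2)/(2\lambda_1-\hat{C}_f)$ as you claim. The repair is either to impose an additional mild restriction such as $C_nh\le 1/4$ (accepting modified constants), or to shift the one-sided Lipschitz estimate onto $\bar{X}^{n,-k\tau}_s$ as the paper does, so that surplus and errors live in the same variable and no inversion is required.
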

\begin{proof} 
Applying the It\^o formula to $e^{2\lambda t}\|\hat{X}_{t}^{n,-k\tau}(\xi)\|^2$, where we consider the differential form \eqref{eqn:diffform}, and taking the expectation yield
\begin{align}\label{eqn:fiveterms}
\begin{split}
     e^{2\lambda t}\mathbb{E}[\|\hat{X}_{t}^{n,-k\tau}(\xi)\|^2]=& e^{-2\lambda k\tau}\mathbb{E}[\|\xi\|^2]+2\lambda \int_{-k\tau}^t e^{2\lambda s}\mathbb{E}[\|\hat{X}_{s}^{n,-k\tau}\|^2]\mathrm{d}s\\
 &-2\int_{-k\tau}^te^{2\lambda s}\mathbb{E}\big( \hat{X}_{s}^{n,-k\tau}, A\hat{X}_{s}^{n,-k\tau}\big)\mathrm{d}s\\
 &+2\int_{-k\tau}^te^{2\lambda s}\mathbb{E}\big( \hat{X}_{s}^{n,-k\tau}, S\big(s-\Lambda(s)\big) f_n(\Lambda(s),   \bar{X}_{s}^{n,-k\tau} )\big) \mathrm{d}s\\
 &+\int_{-k\tau}^t e^{2\lambda s}\mathbb{E}\big[\big\|S\big(s-\Lambda(s)\big) g_n\big(\Lambda(s),\bar{X}_{s}^{n,-k\tau} \big)\big\|^2_{\mathcal{L}^2_0}\big]\mathrm{d}s.
\end{split}
\end{align}
Note that the inner product in the last second term can be further divided into several inner products as follows:
\begin{align}
    \begin{split}
&\big( \hat{X}_{s}^{n,-k\tau}, S\big(s-\Lambda(s)\big) f_n(\Lambda(s),   \bar{X}_{s}^{n,-k\tau} )\big)   \\   
 &= \big( \hat{X}_{s}^{n,-k\tau}, \big(S\big(s-\Lambda(s)\big) -\text{Id}\big) f_n(\Lambda(s),   \hat{X}_{s}^{n,-k\tau} )\big) \\
 &\quad+ \big( \hat{X}_{s}^{n,-k\tau}-\bar{X}_{s}^{n,-k\tau}, f_n(\Lambda(s),   \bar{X}_{s}^{n,-k\tau} )-f_n(\Lambda(s),   0)\big) \\
  &\quad+ \big( \bar{X}_{s}^{n,-k\tau}, f_n(\Lambda(s),   \bar{X}_{s}^{n,-k\tau} )-f_n(\Lambda(s),   0 )\big) \\
  &\quad+ \big( \hat{X}_{s}^{n,-k\tau}, f_n(\Lambda(s),  0)\big)=:\sum_{i=1}^4 V_i.   
    \end{split}
\end{align}
For $V_1$, we have that
\begin{align*}
    2\int_{-k\tau}^te^{2\lambda s}\mathbb{E}[V_1] \mathrm{d}s&= 2\int_{-k\tau}^te^{2\lambda s}\mathbb{E}\big( \hat{X}_{s}^{n,-k\tau}, \big(S\big(s-\Lambda(s)\big) -\text{Id}\big) f_n(\Lambda(s),   \hat{X}_{s}^{n,-k\tau} )\big)  \mathrm{d}s\\
    &\leq 2L_{f,g}\lambda_n h\int_{-k\tau}^te^{2\lambda s}\mathbb{E}\big[\|\hat{X}_{s}^{n,-k\tau}\|(\|\hat{X}_{s}^{n,-k\tau}\|+1)\big]\mathrm{d}s\\
    &\leq L_{f,g}\lambda_n h\int_{-k\tau}^te^{2\lambda s}\mathrm{d}s+\frac{5}{2}\hat{C}_f\lambda_n h\int_{-k\tau}^te^{2\lambda s}\mathbb{E}\big[\|\hat{X}_{s}^{n,-k\tau}\|^2\big]\mathrm{d}s\\
    &\leq \frac{L_{f,g}\lambda_n h(1+5C_n h)}{2\lambda}(e^{2\lambda t}-e^{-2\lambda k\tau})\\
    &\quad+5L_{f,g} \lambda_n h(1+C_n h)\int_{-k\tau}^te^{2\lambda s}\mathbb{E}\big[\|\bar{X}_{s}^{n,-k\tau}\|^2\big]\mathrm{d}s,
\end{align*}
where to deduce the second line we make use of linear growth of $f$ and a similar estimate of \eqref{eqn:operatordiff}, and to bound the term of $\mathbb{E}\big[\|\hat{X}_{s}^{n,-k\tau}\|^2\big]$ in terms of $\mathbb{E}\big[\|\bar{X}_{s}^{n,-k\tau}\|^2\big]$ to get the last line we make use of the following fact from Lemma \ref{lem:onestep_estimate}:
\begin{align}\label{eqn:usfulest1}
    \begin{split}
    \mathbb{E}\big[\|\hat{X}_{s}^{n,-k\tau}\|^2\big]&\leq 2\mathbb{E}\big[\|\bar{X}_{s}^{n,-k\tau}\|^2\big]+2\mathbb{E}\big[\|\hat{X}_{s}^{n,-k\tau}-\bar{X}_{s}^{n,-k\tau}\|^2\big]\\
    &\leq 2(1+C_nh)\mathbb{E}\big[\|\bar{X}_{s}^{n,-k\tau}\|^2\big]+2C_nh.
    \end{split}
\end{align}

For $V_2$, we have that by the Lipchitz condition of $f$, the H\"older inequality and Lemma \ref{lem:onestep_estimate} 
\begin{align*}
    2\int_{-k\tau}^te^{2\lambda s}\mathbb{E}[V_2] \mathrm{d}s&= 2\int_{-k\tau}^te^{2\lambda s}\mathbb{E} \big( \hat{X}_{s}^{n,-k\tau}-\bar{X}_{s}^{n,-k\tau}, f_n(\Lambda(s),   \bar{X}_{s}^{n,-k\tau} )-f_n(\Lambda(s),   0)\big) \mathrm{d}s\\
    &\leq 2C_f\int_{-k\tau}^te^{2\lambda s}\mathbb{E}\big[\|\hat{X}_{s}^{n,-k\tau}-\bar{X}_{s}^{n,-k\tau}\|\|\bar{X}_{s}^{n,-k\tau}\|\big]\mathrm{d}s\\
    &\leq 2C_f\int_{-k\tau}^te^{2\lambda s}\sqrt{\mathbb{E}\big[\|\hat{X}_{s}^{n,-k\tau}-\bar{X}_{s}^{n,-k\tau}\|^2\big]\mathbb{E}\big[\|\bar{X}_{s}^{n,-k\tau}\|^2\big]}\mathrm{d}s\\
    &\leq 2C_f\sqrt{C_n} \sqrt{h}\int_{-k\tau}^te^{2\lambda s}\sqrt{\big(\mathbb{E}\big[\|\bar{X}_{s}^{n,-k\tau}\|^2\big]+1\big)\mathbb{E}\big[\|\bar{X}_{s}^{n,-k\tau}\|^2\big]}\mathrm{d}s\\
    &\leq \frac{2C_f\sqrt{C_n} \sqrt{h}}{2\lambda}(e^{2\lambda t}-e^{-2\lambda k\tau})+2C_f\sqrt{C_n} \sqrt{h}\int_{-k\tau}^te^{2\lambda s}\mathbb{E}\big[\|\bar{X}_{s}^{n,-k\tau}\|^2\big]\mathrm{d}s,
\end{align*}
where we use the fact $\sqrt{(a^2+1)a^2}\leq a^2+1$ to deduce the last line.

For $V_3$, together with the last term in \eqref{eqn:fiveterms}, we are able to make use of dissipative condition in Assumtion \ref{as:dissipative} such that 
\begin{align*}
    & 2\int_{-k\tau}^te^{2\lambda s}\mathbb{E}[V_3] \mathrm{d}s+ \int_{-k\tau}^t e^{2\lambda s}\mathbb{E}\big[\big\|S\big(s-\Lambda(s)\big) g_n\big(\Lambda(s),\bar{X}_{s}^{n,-k\tau} \big)\big\|^2_{\mathcal{L}^2_0}\big]\mathrm{d}s \\
    &\leq  2\int_{-k\tau}^te^{2\lambda s}\mathbb{E}\big[\big( \bar{X}_{s}^{n,-k\tau}, f_n(\Lambda(s),   \bar{X}_{s}^{n,-k\tau} )-f_n(\Lambda(s),   0 )\big)\\
    &\quad \quad \quad + \big\| g_n\big(\Lambda(s),\bar{X}_{s}^{n,-k\tau} \big)-g_n\big(\Lambda(s),0\big)\big\|^2_{\mathcal{L}^2_0} \big]\mathrm{d}s + \int_{-k\tau}^te^{2\lambda s}\| g_n\big(\Lambda(s),0 \big)\big\|^2_{\mathcal{L}^2_0} \big]\mathrm{d}s\\
    &\leq -2C_{f,g}\int_{-k\tau}^te^{2\lambda s}\mathbb{E}\big[\|\bar{X}_{s}^{n,-k\tau}\|^2\big]\mathrm{d}s+\frac{L_{f,g}^2}{\lambda}(e^{2\lambda t}-e^{-2\lambda k\tau}),
\end{align*}
where we also apply linear growth of $g$ in \eqref{eq3:linear_f} to deduce the last line.


For $V_4$, we have that by the linear growth of $f$
\begin{align*}
   & 2\int_{-k\tau}^te^{2\lambda s}\mathbb{E}[V_4] \mathrm{d}s= 2\int_{-k\tau}^te^{2\lambda s}\mathbb{E}\big( \hat{X}_{s}^{n,-k\tau}, f_n(\Lambda(s),  0)\big)\mathrm{d}s\\
    &\leq 2L_{f,g}\int_{-k\tau}^te^{2\lambda s}\mathbb{E}\big[\|\hat{X}_{s}^{n,-k\tau}\|\big]\mathrm{d}s\\
    &\leq \frac{L_{f,g}}{\lambda}(e^{2\lambda t}-e^{-2\lambda k\tau}) +L_{f,g}\int_{-k\tau}^te^{2\lambda s}\mathbb{E}\big[\|\hat{X}_{s}^{n,-k\tau}\|^2\big]\mathrm{d}s
\end{align*}

Under Assumption \ref{as:lambda2}, take $\lambda= \lambda_1-L_{f,g}/2$. In summary, 
\begin{align*}
     &e^{2\lambda t}\mathbb{E}[\|\hat{X}_{t}^{n,-k\tau}(\xi)\|^2]\\
     &\leq  e^{-2\lambda k\tau}\mathbb{E}[\|\xi\|^2]+(2\lambda+L_{f,g}-2\lambda_1) \int_{-k\tau}^t e^{2\lambda s}\mathbb{E}[\|\hat{X}_{s}^{n,-k\tau}\|^2]\mathrm{d}s\\
 &\quad -\big(2C_{f,g}-(5L_{f,g} \sqrt{h\lambda_n}(1+C_n h)+2C_f\sqrt{C_n}) \sqrt{h}\big)\int_{-k\tau}^te^{2\lambda s}\mathbb{E}\big[\|\bar{X}_{s}^{n,-k\tau}\|^2\big]\mathrm{d}s\\
 &\quad +\frac{L_{f,g}\sqrt{\lambda_n h}(1+5C_n h)+2C_f\sqrt{C_n h}+2L_{f,g}^2+2L_{f,g}}{2\lambda}(e^{2\lambda t}-e^{-2\lambda k\tau})\\
 &\leq e^{-2\lambda k\tau}\mathbb{E}[\|\xi\|^2]+ \frac{4(C_{f,g}+L_{f,g}+L^2_{f,g})}{2\lambda_1-L_{f,g}}(e^{2\lambda t}-e^{-2\lambda k\tau}),
\end{align*}
where, to deduce the last line, we make use of the choice for $h$ in \eqref{eqn:hchoice}. This leads to
$$\mathbb{E}[\|\hat{X}_{t}^{n,-k\tau}(\xi)\|^2]\leq \mathbb{E}[\|\xi\|^2]+\frac{4(C_{f,g}+L_{f,g}+L^2_{f,g})}{2\lambda_1-L_{f,g}}(1-e^{-2\lambda (t+k\tau)})\leq C_{\xi}^2+\frac{4(C_{f,g}+L_{f,g}+L^2_{f,g})}{2\lambda_1-L_{f,g}}.$$
The second assertion follows the proof of Lemma \ref{lem:boundedness}.
\end{proof}
\begin{lemma}\label{lem:onesteperror_estimate}
Under Assumption \ref{as:A} to Assumption \ref{as:fg}, denote by $\hat{X}_t^{n,-k\tau}$ and $\hat{Y}_t^{n,-k\tau}$ two Galerkin numerical approximations from \eqref{eq:con_RandM} of SEE \eqref{eq:SPDE} with the same stepsize $h\in (0,1)$ but different initial values $\xi$ and $\eta$. Define $\hat{E}_t^{n,-k\tau}:=\hat{X}_t^{n,-k\tau}-\hat{Y}_t^{n,-k\tau}$ and similarly $\bar{E}_t^{n,-k\tau}:=\bar{X}_t^{n,-k\tau}-\bar{Y}_t^{n,-k\tau}$. Then 
\begin{align}\label{eqn:onesteperror_estimate}
    \mathbb{E}[\|\hat{E}_{t}^{n,-k\tau}-\bar{E}_{t}^{n,-k\tau}\|^2]\leq c_n h\mathbb{E}[\big\|\bar{E}_{\Lambda(t)}^{n,-k\tau}\big\|^2],
\end{align}
where $c_n=3(\lambda^2_n+C_f^2+C_g^2)$.
\end{lemma}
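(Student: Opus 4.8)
The plan is to run the same one-step comparison used in Lemma~\ref{lem:onestep_estimate}, but now on the \emph{difference} of two trajectories, where the decisive simplification is that the noise is additive. Subtracting the representation \eqref{eqn:onestep_full} written for $\hat{X}$ from the one written for $\hat{Y}$, the stochastic integrals $\int_{\Lambda(t)}^t S(t-\Lambda(s))g_n(\Lambda(s))\,\mathrm{d}W(s)$ are identical for both trajectories and cancel exactly; this is why the assertion is a genuinely pathwise ($\mathbb{P}$-a.s.) bound carrying no expectation. Moreover, on the subinterval $(\Lambda(t),t]$ one has $\Lambda(s)=\Lambda(t)$, so $S(t-\Lambda(s))=S(\delta)$ with $\delta:=t-\Lambda(t)\le h$ is constant and the drift integrand is frozen at $f_n(\Lambda(t),\bar{X}_{\Lambda(t)}^{n,-k\tau})-f_n(\Lambda(t),\bar{Y}_{\Lambda(t)}^{n,-k\tau})$. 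Writing $\Delta f$ for this increment, I would obtain the clean identity $\hat{E}_t^{n,-k\tau}-\bar{E}_t^{n,-k\tau}=(S(\delta)-\mathrm{Id})\bar{E}_{\Lambda(t)}^{n,-k\tau}+\delta\,S(\delta)\,\Delta f$.

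Next I would estimate the two resulting terms separately, both living in $H_n$. For the semigroup-defect term I would expand in the eigenbasis exactly as in \eqref{eqn:operatordiff}: since $\bar{E}_{\Lambda(t)}^{n,-k\tau}\in H_n$, one has $\|(S(\delta)-\mathrm{Id})\bar{E}_{\Lambda(t)}^{n,-k\tau}\|^2=\sum_{i=1}^n(1-e^{-\lambda_i\delta})^2(e_i,\bar{E}_{\Lambda(t)}^{n,-k\tau})^2$, and the elementary bound $1-e^{-\lambda_i\delta}\le\lambda_i\delta\le\lambda_n h$ gives $\le\lambda_n^2 h^2\|\bar{E}_{\Lambda(t)}^{n,-k\tau}\|^2$. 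For the drift term I would use $\|S(\delta)\|_{\mathcal{L}(H)}\le1$ from \eqref{eq:stab_S}, the estimate $\delta\le h$, and the fact that $f_n=P_nf$ inherits the global Lipschitz constant $C_f$ of Assumption~\ref{as:f} (because $\|P_n\|_{\mathcal{L}(H)}\le1$), so that $\|\delta S(\delta)\Delta f\|\le h\,C_f\|\bar{E}_{\Lambda(t)}^{n,-k\tau}\|$ and hence its squared norm is $\le C_f^2 h^2\|\bar{E}_{\Lambda(t)}^{n,-k\tau}\|^2$.

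Finally I would combine the two pieces, and this is where I expect the real difficulty. Diagonalising, $\hat{E}_t^{n,-k\tau}-\bar{E}_t^{n,-k\tau}=\sum_{i=1}^n\big[(e^{-\lambda_i\delta}-1)a_i+\delta e^{-\lambda_i\delta}b_i\big]e_i$ with $a_i=(e_i,\bar{E}_{\Lambda(t)}^{n,-k\tau})$ and $b_i=(e_i,\Delta f)$; the squared norm splits into a first contribution ($\le\lambda_n^2 h^2\sum_i a_i^2$), a second contribution ($\le C_f^2 h^2\sum_i b_i^2\le C_f^2 h^2\sum_i a_i^2$, using $\sum_i b_i^2=\|\Delta f\|^2\le C_f^2\|\bar{E}_{\Lambda(t)}^{n,-k\tau}\|^2$), and a cross term $2\delta\sum_i(e^{-\lambda_i\delta}-1)e^{-\lambda_i\delta}a_ib_i$. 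Dropping the cross term yields precisely the claimed additive constant $(\lambda_n^2+C_f^2)h^2$, so the crux is to argue that it does no harm. The subtle point is that this cross term is not obviously signed: the one-sided Lipschitz estimate of Assumption~\ref{as:f} controls $\sum_i a_ib_i=(\bar{E}_{\Lambda(t)}^{n,-k\tau},\Delta f)\le -C_f\|\bar{E}_{\Lambda(t)}^{n,-k\tau}\|^2$, but with the sign that tends to make the weighted cross term nonnegative rather than nonpositive. If one only needs an $O(h^2)$ one-step bound, the triangle inequality gives the slightly weaker constant $(\lambda_n+C_f)^2 h^2$ unconditionally; tightening it to the stated $(\lambda_n^2+C_f^2)h^2$ is exactly the step I would scrutinise in the authors' argument.
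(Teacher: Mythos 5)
Your derivation is exactly the paper's intended argument: the paper's entire proof consists of writing the differential form \eqref{eqn:disnum_diffform} for $\hat{E}_t^{n,-k\tau}$ (the additive noise cancels, which is why the bound is pathwise) and then stating that ``the rest simply follows the proof of Lemma \ref{lem:onestep_estimate}'' --- i.e., precisely your two estimates, the eigenbasis bound for the semigroup defect as in \eqref{eqn:operatordiff} and the Lipschitz bound for the frozen drift increment.

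However, the cross term you single out is a genuine defect of the lemma as stated, not of your argument. The paper never addresses the combination step: in Lemma \ref{lem:onestep_estimate} this is harmless because the final constant $C_n$ is left unspecified, but here the constant $(\lambda_n^2+C_f^2)$ is explicit and is obtained only by silently discarding the cross term, which, as you observe, the one-sided Lipschitz condition in Assumption \ref{as:f} pushes in the unfavourable direction. In fact the stated bound is false in general. Take $f(t,u)=-C_f u$ and $g$ constant (all assumptions hold), $\eta=0$, $\xi=e_n$, and let $t$ be the first grid point after $-k\tau$, so that $\bar{E}_{\Lambda(t)}^{n,-k\tau}=e_n$ and $\hat{E}_{t}^{n,-k\tau}=e^{-\lambda_n h}(1-C_f h)\,e_n$; then
\begin{align*}
\|\hat{E}_{t}^{n,-k\tau}-\bar{E}_{t}^{n,-k\tau}\|^2
=\bigl((1-e^{-\lambda_n h})+C_f h\, e^{-\lambda_n h}\bigr)^2
=(\lambda_n+C_f)^2h^2+O(h^3),
\end{align*}
which exceeds $(\lambda_n^2+C_f^2)h^2$ for all sufficiently small $h$, since $(\lambda_n+C_f)^2-(\lambda_n^2+C_f^2)=2\lambda_n C_f>0$; as the difference dynamics here are deterministic, the claimed almost-sure inequality fails on the whole of $\Omega$. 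The correct unconditional statement is the one you prove by the triangle inequality, with constant $(\lambda_n+C_f)^2$ (equivalently $2(\lambda_n^2+C_f^2)$ via Young's inequality). The damage downstream is only cosmetic: in Lemma \ref{lem:numsoldependence} and in the stepsize condition \eqref{eqn:hmainchoice} one replaces $\sqrt{\lambda_n^2+C_f^2}$ by $\lambda_n+C_f$, so the restriction becomes $h\le 1/(2\lambda_n+C_f)$, and all subsequent conclusions hold unchanged.
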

\begin{proof}
From \eqref{eqn:diffform}, we have that
\begin{align}\label{eqn:disnum_diffform}
\begin{split}
     \mathrm{d}\hat{E}_t^{n,-k\tau} &= -A  \hat{E}_t^{n,-k\tau} +  S\big(t-\Lambda(t)\big) \big(f_n(\Lambda(t),   \bar{X}_{t}^{n,-k\tau} )-f_n(\Lambda(t),   \bar{Y}_{t}^{n,-k\tau} )\big)\mathrm{d}t\\
     &+ S\big(t-\Lambda(t)\big) \big(g_n(\Lambda(t),   \bar{X}_{t}^{n,-k\tau} )-g_n(\Lambda(t),   \bar{Y}_{t}^{n,-k\tau} )\big)\mathrm{d}W(t).
\end{split}
\end{align}
The rest of the proof is similar to the proof of Lemma \ref{lem:onestep_estimate}.
\end{proof}
\begin{lemma}\label{lem:numsoldependence} Under the same assumptions as Lemma \ref{lem:onesteperror_estimate} and Assumption \ref{as:dissipative}.
Denote by $\hat{X}_t^{n,-k\tau}$ and $\hat{Y}_t^{n,-k\tau}$ two approximations of SEE \eqref{eq:SPDE} with different initial values $\xi$ and $\eta$ under the same stepsize $2C_f(\sqrt{h}\lambda_n+\sqrt{c_n})\sqrt{h}\leq C_{f,g}$. Then 
$$\mathbb{E}[\|\hat{X}_t^{n,-k\tau}-\hat{Y}_t^{n,-k\tau}\|^2]\leq e^{-2\lambda_1(t+k\tau)}\mathbb{E}[\|\xi-\eta\|^2].$$
\end{lemma}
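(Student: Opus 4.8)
The plan is to run a pathwise energy estimate on the error process and extract exponential decay by a Gronwall argument. Write $\hat{E}_t := \hat{X}_t^{n,-k\tau} - \hat{Y}_t^{n,-k\tau}$ and $\bar{E}_t := \bar{X}_t^{n,-k\tau} - \bar{Y}_t^{n,-k\tau}$, as in Lemma \ref{lem:onesteperror_estimate}. Since the two approximations are driven by the \emph{same} noise, the stochastic term cancels in the difference, so by \eqref{eqn:disnum_diffform} the process $\hat{E}_t$ solves a pathwise, noiseless evolution equation. I would therefore apply the chain rule (the It\^o formula with no martingale part) to $t\mapsto e^{2\lambda_1 t}\|\hat{E}_t\|^2$ and aim to show this weighted quantity is non-increasing in $t$; integrating from $-k\tau$ then produces the decay.

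Differentiating gives
\[
\tfrac{\mathrm{d}}{\mathrm{d}t}\|\hat{E}_t\|^2 = -2\big(\hat{E}_t, A\hat{E}_t\big) + 2\big(\hat{E}_t,\, S(t-\Lambda(t))\big(f_n(\Lambda(t),\bar{X}_t^{n,-k\tau}) - f_n(\Lambda(t),\bar{Y}_t^{n,-k\tau})\big)\big).
\]
Since on $H_n$ the operator $A$ has smallest eigenvalue $\lambda_1$, the first term is bounded by $-2\lambda_1\|\hat{E}_t\|^2$, which exactly cancels the $2\lambda_1 e^{2\lambda_1 t}\|\hat{E}_t\|^2$ produced by the exponential weight. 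It then remains to show the drift cross-term is non-positive under the stated step-size restriction. Mirroring the decomposition in the proof of Lemma \ref{lem:boundednessG}, I would split it into $(\hat{E}_t,(S(t-\Lambda(t))-\text{Id})(f_n(\bar{X}_t^{n,-k\tau})-f_n(\bar{Y}_t^{n,-k\tau})))$, $(\hat{E}_t-\bar{E}_t, f_n(\bar{X}_t^{n,-k\tau})-f_n(\bar{Y}_t^{n,-k\tau}))$, and $(\bar{E}_t, f_n(\bar{X}_t^{n,-k\tau})-f_n(\bar{Y}_t^{n,-k\tau}))$.

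The last, dominant piece is where the dissipativity enters: because $\bar{E}_t\in H_n$ and $(x,f_n(t,y))=(x,f(t,y))$ for $x,y\in H_n$, the one-sided Lipschitz bound in Assumption \ref{as:f} yields $(\bar{E}_t, f_n(\bar{X}_t^{n,-k\tau})-f_n(\bar{Y}_t^{n,-k\tau}))\le -C_f\|\bar{E}_t\|^2$. The two remaining pieces are genuine $O(h)$ discretization errors, which I would control using $\|(S(t-\Lambda(t))-\text{Id})x\|\le\lambda_n h\|x\|$ for $x\in H_n$ (as in \eqref{eqn:operatordiff}), the Lipschitz bound $\|f_n(\bar{X}_t^{n,-k\tau})-f_n(\bar{Y}_t^{n,-k\tau})\|\le C_f\|\bar{E}_t\|$, the one-step estimate $\|\hat{E}_t-\bar{E}_t\|\le\sqrt{\lambda_n^2+C_f^2}\,h\,\|\bar{E}_t\|$ from square-rooting \eqref{eqn:onesteperror_estimate} (noting $\bar{E}_t=\bar{E}_{\Lambda(t)}$ since $\bar{E}$ is piecewise constant), together with $\|\hat{E}_t\|\le(1+\sqrt{\lambda_n^2+C_f^2}\,h)\|\bar{E}_t\|$. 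Assembled, the three pieces are bounded by a multiple of $\|\bar{E}_t\|^2$ whose coefficient is driven non-positive by the assumption $h\le 1/(\lambda_n+\sqrt{\lambda_n^2+C_f^2})$, so that $\tfrac{\mathrm{d}}{\mathrm{d}t}(e^{2\lambda_1 t}\|\hat{E}_t\|^2)\le 0$.

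Finally I would integrate from $-k\tau$ to $t$ to obtain $\|\hat{E}_t\|^2\le e^{-2\lambda_1(t+k\tau)}\|\hat{E}_{-k\tau}\|^2$, and since $\hat{E}_{-k\tau}=P_n(\xi-\eta)$ with $\|P_n(\xi-\eta)\|\le\|\xi-\eta\|$, taking expectations delivers the asserted mean-square exponential decay. I expect the main obstacle to be the bookkeeping in the third paragraph: the one-sided Lipschitz dissipativity is only available for the piecewise-constant argument $\bar{E}_t$, whereas the energy identity naturally features the time-continuous error $\hat{E}_t$, so the whole estimate hinges on replacing $\hat{E}_t$ by $\bar{E}_t$ and absorbing the resulting semigroup- and interpolation-errors into the dissipative margin $-C_f\|\bar{E}_t\|^2$ — precisely the balance encoded in the step-size condition.
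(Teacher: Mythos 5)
Your overall route is the same as the paper's: since both schemes are driven by the same noise, the error $\hat{E}_t$ satisfies the noiseless equation \eqref{eqn:disnum_diffform}; one applies the It\^o formula (with no martingale part) to $e^{2\lambda_1 t}\|\hat{E}_t\|^2$, cancels the exponential weight against $-2(\hat{E}_t,A\hat{E}_t)\le -2\lambda_1\|\hat{E}_t\|^2$ on $H_n$, splits the drift cross-term three ways so that the one-sided Lipschitz condition acts on the pairing with $\bar{E}_t$, controls the two discretization pieces via Lemma \ref{lem:onesteperror_estimate} and the bound $\|(S(t-\Lambda(t))-\mathrm{Id})x\|\le \lambda_n h \|x\|$ for $x\in H_n$, and closes with the step-size restriction. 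This is precisely the paper's argument, and your endgame (pathwise integration, then $\|P_n(\xi-\eta)\|\le\|\xi-\eta\|$) is if anything slightly more careful than the paper's.

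However, your assembly step does not close under the stated hypothesis, and the culprit is your choice of grouping. You pair $\hat{E}_t$, not $\bar{E}_t$, with the semigroup-difference term, which forces the conversion $\|\hat{E}_t\|\le (1+\sqrt{\lambda_n^2+C_f^2}\,h)\|\bar{E}_t\|$; the total coefficient of $\|\bar{E}_t\|^2$ in your three pieces is then $C_f\big(h(\lambda_n+\sqrt{\lambda_n^2+C_f^2})-1+\lambda_n\sqrt{\lambda_n^2+C_f^2}\,h^2\big)$. The extra $O(h^2)$ term is not dominated by the assumption $h\le 1/\big(\lambda_n+\sqrt{\lambda_n^2+C_f^2}\big)$: at the maximal admissible $h$ the coefficient equals $C_f\lambda_n\sqrt{\lambda_n^2+C_f^2}\,h^2>0$, and there is no spare negative quantity left to absorb it, because the $A$-term was spent in full cancelling the weight $e^{2\lambda_1 t}$. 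So the claimed monotonicity of $e^{2\lambda_1 t}\|\hat{E}_t\|^2$, hence the rate $e^{-2\lambda_1(t+k\tau)}$, does not follow from your estimates for $h$ at or near the stated threshold; your argument only proves the lemma under a strictly stronger step-size condition. The repair is the paper's grouping: with $\Delta f:=f_n\big(\Lambda(t),\bar{X}_t^{n,-k\tau}\big)-f_n\big(\Lambda(t),\bar{Y}_t^{n,-k\tau}\big)$, write the cross term as $(\bar{E}_t,\Delta f)+\big(\bar{E}_t,(S(t-\Lambda(t))-\mathrm{Id})\Delta f\big)+\big(\hat{E}_t-\bar{E}_t,S(t-\Lambda(t))\Delta f\big)$, so that every piece is bounded directly by a multiple of $\|\bar{E}_t\|^2$: the three bounds are $-C_f\|\bar{E}_t\|^2$, $C_f\lambda_n h\|\bar{E}_t\|^2$ and $C_f\sqrt{\lambda_n^2+C_f^2}\,h\|\bar{E}_t\|^2$, which sum to $-C_f\big(1-h(\lambda_n+\sqrt{\lambda_n^2+C_f^2})\big)\|\bar{E}_t\|^2\le 0$ exactly under the stated condition. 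With that regrouping the rest of your proof goes through verbatim and coincides with the paper's.
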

\begin{proof}
Similar as the proof of Lemma \ref{lem:boundednessG}, we apply the It\^o formula to $e^{2\lambda_1 t}\|\hat{E}_t^{n,-k\tau}\|^2$, take its expectation, make use of the It\^o isometry and get
\begin{align}\label{eqn:numerror_full}
    \begin{split}
     e^{2\lambda_1 t}\mathbb{E}[\|\hat{E}_t^{n,-k\tau}\|^2]&=e^{-2\lambda_1 k\tau}\mathbb{E}[\|\xi-\eta\|^2]+2\lambda_1 \int_{-k\tau}^t e^{2\lambda_1 s}\mathbb{E}[\|\hat{E}_{s}^{n,-k\tau}\|^2]\mathrm{d}s\\
 &-2\int_{-k\tau}^te^{2\lambda_1 s}\mathbb{E}\big( \hat{E}_{s}^{n,-k\tau}, A\hat{E}_{s}^{n,-k\tau}\big)\mathrm{d}s\\
     &+2\int_{-k\tau}^te^{2\lambda_1 s}\mathbb{E}\Big(\hat{E}_s^{n,-k\tau}, S\big(s-\Lambda(s)\big)\big(f(\Lambda(s),\bar{X}_{s}^{n,-k\tau})-f(\Lambda(s),\bar{Y}_{s}^{n,-k\tau})\big)\Big)\mathrm{d}s\\
     &+\int_{-k\tau}^t e^{2\lambda_1 s}\mathbb{E}\big[\|S\big(s-\Lambda(s)\big)\big(g(\Lambda(s),\bar{X}_{s}^{n,-k\tau})-g(\Lambda(s),\bar{Y}_{s}^{n,-k\tau})\big)\|^2_{\mathcal{L}^2_0}\big]\mathrm{d}s.
\end{split}
\end{align}
In order to make use of the dissipative condition in Assumption \ref{as:dissipative}, we further decompose the following term into three terms
\begin{align*}
    &\Big(\hat{E}_s^{n,-k\tau}, S\big(s-\Lambda(s)\big)\big(f(\Lambda(s),\bar{X}_{s}^{n,-k\tau})-f(\Lambda(s),\bar{Y}_{s}^{n,-k\tau})\big)\Big)\\
    &=\big(\bar{E}_s^{n,-k\tau}, f(\Lambda(s),\bar{X}_{s}^{n,-k\tau})-f(\Lambda(s),\bar{Y}_{s}^{n,-k\tau})\big)\\
    &\quad +\Big(\bar{E}_s^{n,-k\tau}, \big(S\big(s-\Lambda(s)\big)-\text{Id}\big)\big(f(\Lambda(s),\bar{X}_{s}^{n,-k\tau})-f(\Lambda(s),\bar{Y}_{s}^{n,-k\tau})\big)\Big)\\
    &\quad+ \Big(\hat{E}_s^{n,-k\tau}-\bar{E}_s^{n,-k\tau}, S\big(s-\Lambda(s)\big)\big(f(\Lambda(s),\bar{X}_{s}^{n,-k\tau})-f(\Lambda(s),\bar{Y}_{s}^{n,-k\tau})\big)\Big)\\
    &=:U_1+U_2+U_3.
\end{align*}

Substituting the right hand side into Eqn. \eqref{eqn:numerror_full} and applying the dissipative condition  in Assumption \ref{as:dissipative} give that
\begin{align*}
    &e^{2\lambda_1 t}\mathbb{E}[\|\hat{E}_t^{n,-k\tau}\|^2]\leq  e^{-2\lambda_1 k\tau}\mathbb{E}[\|\xi-\eta\|^2]+2(\lambda_1-\lambda_1)\int_{-k\tau}^te^{2\lambda_1 s}\mathbb{E}[\|\hat{E}_{s}^{n,-k\tau}\|^2]\mathrm{d}s\\
    &-C_{f,g}\int_{-k\tau}^te^{2\lambda_1 s}\mathbb{E}[\|\bar{E}_{s}^{n,-k\tau}\|^2]\mathrm{d}s+2\int_{-k\tau}^te^{2\lambda_1 s}\mathbb{E}[U_2]\mathrm{d}s+2\int_{-k\tau}^te^{2\lambda_1 s}\mathbb{E}[U_3]\mathrm{d}s.
\end{align*}

For the term involving $U_2$, we have that
\begin{align*}
   2\int_{-k\tau}^te^{2\lambda_1 s}\mathbb{E}[U_2]\mathrm{d}s\leq 2C_f \lambda_n h  \int_{-k\tau}^te^{2\lambda_1 s}\mathbb{E}[\|\hat{E}_{s}^{n,-k\tau}\|^2]\mathrm{d}s,
\end{align*}
where we bound $\|S\big(s-\Lambda(s)\big)-\text{Id}\big) \cdot\|\leq \lambda_n h \|\cdot\|$ as we deduce the bound in \eqref{eqn:operatordiff} of Lemma \ref{lem:onestep_estimate} .

For the term involving $U_3$, we have that
\begin{align*}
    2\int_{-k\tau}^te^{2\lambda_1 s}\mathbb{E}[U_3]\mathrm{d}s &\leq 2C_f \int_{-k\tau}^t e^{2\lambda_1 s}\mathbb{E}[\|\hat{E}_s^{n,-k\tau}-\bar{E}_s^{n,-k\tau}\|\|\bar{E}_s^{n,-k\tau}\|]\mathrm{d}s\\
    &\leq 2C_f\sqrt{c_n}\sqrt{h} \int_{-k\tau}^t e^{2\lambda_1 s}\mathbb{E}[\|\bar{E}_s^{n,-k\tau}\|^2]\mathrm{d}s,
\end{align*}
where we apply Lemma \ref{lem:onesteperror_estimate} to deduce the last line.

In summary, we have that
\begin{align*}
    e^{2\lambda_1 t}\mathbb{E}[\|\hat{E}_t^{n,-k\tau}\|^2]&\leq  e^{-2\lambda_1 k\tau}\mathbb{E}[\|\xi-\eta\|^2]
    -\big(C_{f,g}-2C_f(\sqrt{h}\lambda_n+\sqrt{c_n})\sqrt{h}\big)\int_{-k\tau}^t e^{2\lambda_1 s}\mathbb{E}[\|\bar{E}_{s}^{n,-k\tau}\|^2]\mathrm{d}s\\
    &\leq e^{-2\lambda_1 k\tau}\mathbb{E}[\|\xi-\eta\|^2]
\end{align*}
because of the choice of stepsize $h$. Then the assertion follows.
\end{proof}
\begin{thm}\label{thm:main2}
Under Assumptions \ref{as:A} to \ref{as:lambda2}, for any $h\in(0,1)$ satisfying 
\begin{equation}\label{eqn:hmainchoice}
  2C_f(\sqrt{h}\lambda_n+\sqrt{c_n})\sqrt{h}\leq C_{f,g} \text{ and }     \big(5L_{f,g} \sqrt{h\lambda_n}(1+C_n h)+2C_f\sqrt{C_n}\big) \sqrt{h}\leq 2C_{f,g},
\end{equation}
the Galerkin numerical approximation \eqref{eq:con_RandM} admits a unique random period solution $\hat{X}^{n,*}_{t}\in L^2(\Omega;H)$ such that
    \begin{equation}\label{eqn:lim_sdenum}
    \lim_{k\to \infty}\mathbb{E}[\|\hat{X}^{n,-k\tau}_t(\xi)-\hat{X}^{n,*}_t\|^2]=0.
\end{equation}
\end{thm}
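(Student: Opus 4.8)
The plan is to mirror the pull-back construction used for Theorem \ref{thm:main1}, replacing its continuous-time ingredients by their discrete counterparts: Lemma \ref{lem:boundednessG} supplies the uniform second-moment bound on the scheme, while Lemma \ref{lem:numsoldependence} supplies the exponential contraction between two trajectories of \eqref{eq:con_RandM} started from different data. Concretely, I would fix a grid point $t$ and study the pull-back sequence $(\hat{X}^{n,-k\tau}_t(\xi))_{k\in\mathbb{N}}$, aiming to show it is Cauchy in $L^2(\Omega;H)$.

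First I would exploit the semi-flow (cocycle) property of the recursion \eqref{eq:con_RandM}. For $m>k$, since the global grid $\mathcal{T}^h=\{jh\}$ is common to all pull-backs (the period $\tau$ being an integer multiple of $h$, so that $-k\tau\in\mathcal{T}^h$ for every $k$), the trajectory started at $-m\tau$ coincides, on $[-k\tau,t]$, with the scheme started at $-k\tau$ from the random initial datum $\hat{X}^{n,-m\tau}_{-k\tau}(\xi)$. Applying Lemma \ref{lem:numsoldependence} to these two trajectories then gives
$$\mathbb{E}\big[\|\hat{X}^{n,-k\tau}_t(\xi)-\hat{X}^{n,-m\tau}_t(\xi)\|^2\big]\le e^{-2\lambda_1(t+k\tau)}\,\mathbb{E}\big[\|P_n\xi-\hat{X}^{n,-m\tau}_{-k\tau}(\xi)\|^2\big].$$
By Lemma \ref{lem:boundednessG} and Assumption \ref{as:ini} the right-hand factor is bounded uniformly in $k$ and $m$, so the prefactor $e^{-2\lambda_1(t+k\tau)}$ forces the left-hand side to vanish as $k\to\infty$, uniformly in $m>k$. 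Completeness of $L^2(\Omega;H)$ then yields a limit $\hat{X}^{n,*}_t$, and the same contraction shows this limit is independent of $\xi$, which will later deliver uniqueness.

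Next I would verify the two defining properties in Definition \ref{feng-zhao1}. The semi-flow identity passes to the limit because $\hat{X}^{n,*}$ is an $L^2$-limit of trajectories of one and the same scheme. For the periodicity relation I would use Assumption \ref{as:preserve} together with the $\tau$-periodicity of $f_n$ and $g_n$ inherited from Assumptions \ref{as:f} and \ref{as:g}: a shift of the integration variable by $\tau$ in \eqref{eq:con_RandM}, combined with the stationarity of the Wiener increments under $\theta_\tau$, yields the pathwise identity $\hat{X}^{n,-(k+1)\tau}_{t+\tau}(\xi,\omega)=\hat{X}^{n,-k\tau}_t(\xi,\theta_\tau\omega)$; letting $k\to\infty$ on both sides produces $\hat{X}^{n,*}_{t+\tau}(\omega)=\hat{X}^{n,*}_t(\theta_\tau\omega)$. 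Uniqueness then follows because any random periodic solution $Y^{*}$ equals its own pull-back, $Y^{*}_t=\hat{X}^{n,-k\tau}_t(Y^{*}_{-k\tau})$ for every $k$, and the right-hand side converges to the initial-condition-independent limit $\hat{X}^{n,*}_t$.

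The main obstacle I anticipate is the structural bookkeeping required to invoke the cocycle property at the pull-back nodes $-k\tau$: one must confirm that all the shifted grids share these nodes (which is exactly what the compatibility $\tau/h\in\mathbb{N}$ guarantees) and that the recursion genuinely intertwines with the Wiener shift, so that the identity $\hat{X}^{n,-(k+1)\tau}_{t+\tau}(\xi,\omega)=\hat{X}^{n,-k\tau}_t(\xi,\theta_\tau\omega)$ holds at the level of the scheme rather than merely in law. Once these facts are secured, the quantitative estimates are immediate from Lemmas \ref{lem:boundednessG} and \ref{lem:numsoldependence}, so that the analytic content of the theorem is essentially carried by those two lemmas, exactly as in the passage from Lemmas \ref{lem:boundedness}--\ref{lem:stable} to Theorem \ref{thm:main1}.
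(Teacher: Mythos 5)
Your proposal is correct and takes essentially the same route as the paper: the paper's ``proof'' is a one-line reference stating that, given Lemma \ref{lem:boundednessG} (uniform boundedness) and Lemma \ref{lem:numsoldependence} (exponential contraction), the argument follows Theorem 3.4 of \cite{rpsnumerics2017}, and your pull-back Cauchy-sequence construction, the shift identity $\hat{X}^{n,-(k+1)\tau}_{t+\tau}(\xi,\omega)=\hat{X}^{n,-k\tau}_t(\xi,\theta_\tau\omega)$, and the initial-condition-independence argument for uniqueness are exactly the content of that cited proof adapted to the discrete scheme. Your explicit flagging of the grid-compatibility requirement $\tau/h\in\mathbb{N}$ is a point the paper leaves implicit, and is a worthwhile observation.
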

With Lemma \ref{lem:boundednessG} and Lemma \ref{lem:numsoldependence}, the proof is similar to the proof of Theorem 3.4 in \cite{rpsnumerics2017}.
\subsection{The convergence}\label{sec:convergence}
\begin{thm}\label{thm:error}
Under Assumption \ref{as:A} to Assumption \ref{as:fg},  and Assumption \ref{as:dissipative} to Assumption \ref{as:lambda2}, let $X^{-k\tau}_{\cdot}$ be the solution of SEE \eqref{eq:SPDE} with the initial condition $\xi\in L^{2}(\Omega, \mathcal{F}_{-k\tau}, \mathbb{P};
  \dot{H}^r)$ for some $r\in (0,1)$, and let $\hat{X}^{n,-k\tau}_{\cdot}$ be its numerical simulation defined by \eqref{eq:con_RandM} with the stepsize $h$ satisfying \eqref{eqn:hchoice}. Then for any any $\nu_1 \in (0,r/2]$, there exists a constant C, which depends on $\xi$, $A$, $f$, $g$ , $r$, $\nu_1$ and the uniform bounds of both $X^{-k\tau}_{\cdot}$ and $\hat{X}^{n,-k\tau}_{\cdot}$, such that
\begin{equation}\label{eqn:error}
    \sup_{k\in \mathbb{N}}\sup_{t\geq k\tau}\big(\mathbb{E}[\|X_t^{-k\tau}-\hat{X}_t^{n,-k\tau}\|^2]\big)^{1/2}\leq  C\big(h^{{\nu_1\wedge \kappa}}+\frac{1}{\sqrt{\lambda_n^r}}\big),
\end{equation}
{where $\nu_1\wedge \kappa$ represents the smaller between $\nu_1$ and $\kappa$. }
\end{thm}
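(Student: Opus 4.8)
The plan is to work directly from the two mild representations, subtracting the continuous scheme \eqref{eq:con_RandM} from \eqref{eq:mild}, and to split the error $e_t:=X_t^{-k\tau}-\hat X_t^{n,-k\tau}$ into an initial-data term, a drift integral, and a stochastic convolution:
\begin{align*}
 e_t &= S(t+k\tau)(\mathrm{Id}-P_n)\xi + \int_{-k\tau}^t\big[S(t-s)f(s,X_s^{-k\tau}) - S(t-\Lambda(s))f_n(\Lambda(s),\bar X_s^{n,-k\tau})\big]\,\mathrm{d}s \\
 &\quad + \int_{-k\tau}^t\big[S(t-s)g(s) - S(t-\Lambda(s))g_n(\Lambda(s))\big]\,\mathrm{d}W(s).
\end{align*}
I would bound the $L^2(\Omega;H)$-norm of each piece separately, uniformly in $k$ and $t\ge -k\tau$, using Minkowski's inequality, and set $M:=\sup_k\sup_{t\ge-k\tau}(\mathbb{E}\|e_t\|^2)^{1/2}$, which is finite by Lemma \ref{lem:boundedness} and Lemma \ref{lem:boundednessG}.

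For the initial term the semigroup gives no help near $t=-k\tau$, so the order must come from the spatial regularity of the data: since $A$ and $P_n$ commute, $(\mathrm{Id}-P_n)\xi = A^{-r/2}(\mathrm{Id}-P_n)A^{r/2}\xi$ and $\|A^{-r/2}(\mathrm{Id}-P_n)\|_{\mathcal{L}(H)}\le\lambda_{n+1}^{-r/2}$, so $(\mathbb{E}\|S(t+k\tau)(\mathrm{Id}-P_n)\xi\|^2)^{1/2}\le \lambda_{n+1}^{-r/2}(\mathbb{E}\|\xi\|_r^2)^{1/2}$. This is precisely the $\lambda_n^{-r/2}$ contribution and explains why the spatial rate is governed by the regularity index $r$ of the starting point.

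For the drift integral I would insert $S(t-\Lambda(s))$ together with the intermediate evaluations $f(s,\hat X_s^{n,-k\tau})$, $f(s,\bar X_s^{n,-k\tau})$, $f(\Lambda(s),\bar X_s^{n,-k\tau})$ to write the integrand as a telescoping sum of five differences: (i) the semigroup mismatch $S(t-s)(\mathrm{Id}-S(s-\Lambda(s)))f(s,X_s^{-k\tau})$, handled with \eqref{eqn:Adiff}--\eqref{eqn:Aestimate} after the factorisation $S=S(\tfrac12\cdot)S(\tfrac12\cdot)$ and the Gamma-function convolution bound of Lemma \ref{lem:boundedness}, of order $h^{\nu_1}$; (ii) $f(s,X_s)-f(s,\hat X_s)$, which by the Lipschitz bound and $\|S(t-\Lambda(s))\|\le e^{-\alpha(t-s)}$ yields the self-referential term $C_f\int_{-k\tau}^t e^{-\alpha(t-s)}\|e_s\|\,\mathrm{d}s$; (iii)--(iv) the one-step jump $f(s,\hat X_s)-f(s,\bar X_s)$ and the time-H\"older jump $f(s,\bar X_s)-f(\Lambda(s),\bar X_s)$, bounded via Lemma \ref{lem:onestep_estimate} and the second estimate of Assumption \ref{as:f}, both of order $h^{1/2}$, hence $O(h^{\nu_1})$ since $\nu_1\le r/2<1/2$; and (v) the projection $(\mathrm{Id}-P_n)f(\Lambda(s),\bar X_s)$, for which $\|S(t-\Lambda(s))(\mathrm{Id}-P_n)\|\le e^{-\lambda_{n+1}(t-s)}$ from \eqref{eqn:Sdiffestimate} gives $O(\lambda_{n+1}^{-1})$. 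The stochastic convolution is treated the same way after It\^o's isometry: the integrand splits into the semigroup mismatch (again $O(h^{\nu_1})$ via \eqref{eqn:Adiff} and the Gamma bound, where squaring forces $\nu_1<1/2$), the increment $g(s)-g(\Lambda(s))$ controlled by Assumption \ref{as:g} ($O(h)$), and the projection $(\mathrm{Id}-P_n)g(\Lambda(s))$ which needs \emph{no} spatial smoothness of $g$ because the factor $\|S(t-\Lambda(s))(\mathrm{Id}-P_n)\|\le e^{-\lambda_{n+1}(t-s)}$ supplies the decay, giving $O(\lambda_{n+1}^{-1/2})$, hence $O(\lambda_n^{-r/2})$ as $1/2>r/2$.

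Collecting the estimates yields $M\le \tilde C(h^{\nu_1}+\lambda_n^{-r/2})+(C_f/\alpha)M$; since Assumption \ref{as:lambda2} guarantees $\alpha>C_f$, the self-referential term is absorbed and $M\le \tilde C(1-C_f/\alpha)^{-1}(h^{\nu_1}+\lambda_n^{-r/2})$, which is \eqref{eqn:error}. I expect the main obstacle to be the stochastic-convolution projection error: with only the $\mathcal{L}_2^0$-bound on $g$ one cannot make $\|(\mathrm{Id}-P_n)g\|_{\mathcal{L}_2^0}$ small, so a direct energy estimate on $\|e_t\|^2$ through It\^o's formula fails, and it is essential to keep the computation in mild form so that the smoothing estimate \eqref{eqn:Sdiffestimate} inside the convolution provides the spatial order. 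The secondary difficulty is that every bound must be uniform over the unbounded interval $[-k\tau,t]$, which is exactly why the exponential decay $\|S(t)\|\le e^{-\alpha t}$ (Assumption \ref{as:S}) combined with $\alpha>C_f$ must replace a finite-horizon Gr\"onwall argument.
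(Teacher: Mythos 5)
Your overall architecture — subtracting the two mild forms, bounding each piece in $L^2(\Omega;H)$ uniformly in $k$ and $t$ via Minkowski, and closing the estimate by absorbing the self-referential term through $C_f/\alpha<1$ — is exactly the paper's, and your treatment of the initial term, the two projection terms, the semigroup-mismatch terms and the noise increments is sound. However, there is a genuine gap in your drift decomposition, at step (iii): you bound the one-step jump $f(s,\hat X_s^{n,-k\tau})-f(s,\bar X_s^{n,-k\tau})$ by Lemma \ref{lem:onestep_estimate} and claim it is of order $h^{1/2}$. The constant $C_n$ in that lemma is \emph{not} uniform in $n$: its proof gives
$\mathbb{E}[\|\hat X_s^{n,-k\tau}-\bar X_s^{n,-k\tau}\|^2]\le \big(\lambda_n^2h^2+\hat C_f^2h^2+C_g^2h\big)\big(1+\mathbb{E}[\|\bar X_{\Lambda(s)}^{n,-k\tau}\|^2]\big)$, i.e. $C_n$ grows like $\lambda_n^2$. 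So your term (iii) is of size $\sqrt{C_n h}$, which contains a contribution of order $\lambda_n h$; the step-size restriction \eqref{eqn:hchoice} only guarantees $\sqrt{C_n h}\le 1$, so this term is merely $O(1)$, not $O(h^{\nu_1})$, uniformly over admissible pairs $(n,h)$. Since the constant $C$ in \eqref{eqn:error} must be independent of $n$ (otherwise the bound $C(h^{\nu_1}+\lambda_n^{-r/2})$ carries no information as $n\to\infty$ and $h\to 0$ jointly), your route does not deliver the claimed rate.

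The paper avoids this by never comparing $\hat X^{n}$ with $\bar X^{n}$ inside the drift. It inserts the \emph{exact} solution at grid points instead: the difference $f_n(s,X_s^{-k\tau})-f_n\big(\Lambda(s),X_{\Lambda(s)}^{-k\tau}\big)$ is handled by the time-H\"older bound on $f$ together with Lemma \ref{lem:solcontinuity}, the temporal continuity of the exact solution, whose constant $C_X(\nu_1,r)$ is independent of $n$ precisely because $\xi\in L^{2}(\Omega;\dot H^r)$ — this is where that hypothesis is really used; and the remaining difference $f_n\big(\Lambda(s),X_{\Lambda(s)}^{-k\tau}\big)-f_n(\Lambda(s),\bar X_s^{n,-k\tau})$ is a pure grid-point error, since $\bar X_s^{n,-k\tau}=\hat X_{\Lambda(s)}^{n,-k\tau}$, and so is absorbed into the self-referential supremum exactly like your term (ii). If you wanted to salvage your route, you would need an $n$-uniform one-step estimate, i.e. a uniform bound on $\mathbb{E}[\|\hat X_s^{n,-k\tau}\|_r^2]$ combined with the smoothing estimate $\|(S(s-\Lambda(s))-\text{Id})A^{-r/2}\|_{\mathcal{L}(H)}\le C_1(r/2)\,h^{r/2}$ in place of the crude bound $\lambda_n h$ — which amounts to re-proving Lemma \ref{lem:solcontinuity} for the numerical solution, duplicating work the paper gets for free from the exact solution.
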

\begin{proof}
From the mild form \eqref{eq:mild} and the continuous version \eqref{eq:con_RandM} for the Galerkin numerical approximation we derive that
\begin{align}
    \begin{split}
        &X_t^{-k\tau}-\hat{X}_t^{n,-k\tau}=S(t+k\tau)(\text{Id}-P_n)\xi + \int_{-k\tau}^t S(t-s)(\text{Id}-P_n)f(s,X_s^{-k\tau})\mathrm{d}s\\
        &\quad +\int_{-k\tau}^t S(t-s)\big(f_n(s,X_s^{-k\tau})-f_n\big(\Lambda(s),X_{\Lambda(s)}^{-k\tau}\big)\big)\mathrm{d}s\\
        &\quad +\int_{-k\tau}^t S(t-s)\big(f_n\big(\Lambda(s),X_{\Lambda(s)}^{-k\tau}\big)-f_n(\Lambda(s),\bar{X}_s^{n,-k\tau})\big)\mathrm{d}s\\
        &\quad +\int_{-k\tau}^t S(t-s)(S(s-\Lambda(s))-\text{Id})f_n(\Lambda(s),\bar{X}_s^{n,-k\tau})\mathrm{d}s\\
        &\quad +\int_{-k\tau}^t S(t-s)(\text{Id}-P_n)g(s,X_s^{-k\tau})\mathrm{d}W(s)\\
        &\quad+\int_{-k\tau}^t S(t-s)\big(g_n(s,X_s^{-k\tau})-g_n\big(\Lambda(s),X_{\Lambda(s)}^{-k\tau}\big)\big)\mathrm{d}W(s)\\
        &\quad +\int_{-k\tau}^t S(t-s)\big(g_n\big(\Lambda(s),X_{\Lambda(s)}^{-k\tau}\big)-g_n(\Lambda(s),\bar{X}_s^{n,-k\tau})\big)\mathrm{d}W(s)\\
        &\quad +\int_{-k\tau}^t S(t-s)(S(s-\Lambda(s))-\text{Id})g_n(\Lambda(s),\bar{X}_s^{n,-k\tau})\mathrm{d}W(s)=:\sum_{i=1}^9 J_i.
    \end{split}
\end{align}
It remains to estimate each of $\{J_i\}_{i=1}^9$ in $\mathbb{E}[\|\cdot\|^2]$ with a finite bound that is independent of $k$ and $t$. For $J_1$, we can get the following estimate based on Assumption \ref{as:A} and the condition on $\xi$:
\begin{align*}
   &\mathbb{E}[\|S(t+k\tau)(\text{Id}-P_n)\xi\|^2]\\ &=\mathbb{E}\big[ \sum_{i=n+1}^\infty e^{-2(t+k\tau)\lambda_i} (e_i,\xi)^2\big]=\mathbb{E}\big[ \sum_{i=n+1}^\infty \frac{e^{-2(t+k\tau)\lambda_i}}{\lambda_n^r} \lambda^r_n (e_i,\xi)^2\big]\\
   &\leq\frac{1}{\lambda_n^r}\mathbb{E}\big[ \sum_{i=1}^\infty \lambda^r_n (e_i,\xi)^2\big]=\frac{1}{\lambda_n^r}\mathbb{E}[\|A^{\frac{r}{2}}\xi\|^2].
\end{align*}
For $J_2$, by using the same decomposition for $x\in H$ as in $J_1$, the linear growth of $f$, and the uniform boundedness of $X^{-k\tau}_t$ (see Lemma \ref{lem:boundedness}), one can see that
\begin{align*}
       &\mathbb{E}\big[\big\|\int_{-k\tau}^t S(t-s)(\text{Id}-P_n)f(s,X_s^{-k\tau})\mathrm{d}s\big\|^2\big]\\
       &=\mathbb{E}\big[\big\|\int_{-k\tau}^t \sum_{i=n+1}^\infty e^{-2(t-s)\lambda_i} (e_i,f(s,X_s^{-k\tau}))\mathrm{d}s\big\|^2\big]\\
       &\leq \mathbb{E}\Big[\Big(\int_{-k\tau}^t e^{-2(t-s)\lambda_{n+1}} \Big\|\sum_{i=n+1}^\infty  (e_i,f(s,X_s^{-k\tau}))\Big\|\mathrm{d}s\Big)^2\Big]\\
       &\leq \int_{-k\tau}^t e^{-2\lambda_{n+1}(t-s)} \mathrm{d}s \int_{-k\tau}^t e^{-2\lambda_{n+1}(t-s)} \mathbb{E}[\|f(s,X_s^{-k\tau})\|^2]\mathrm{d}s\\
       &\leq 2L_{f,g}^2 \big(1+\sup_{k\in \mathbb{N}}\sup_{t\geq -k\tau}\mathbb{E}[\|X^{-k\tau}_t\|]\big)\frac{(1-e^{-2\lambda_{n+1}(t+k\tau)})^2}{\lambda_{n+1}^2}\\
       &\leq \frac{2}{\lambda_{n+1}^2} L_{f,g}^2 \big(1+\sup_{k\in \mathbb{N}}\sup_{t\geq -k\tau}\mathbb{E}[\|X^{-k\tau}_t\|]\big).
\end{align*}
To get the upper bound for $J_3$, one shall apply the H\"older inequality and Assumption \ref{as:fg}, and then make use of Lemma \ref{lem:solcontinuity},
\begin{align*}
           &\big(\mathbb{E}\big[\big\|\int_{-k\tau}^t S(t-s)\big(f_n(s,X_s^{-k\tau})-f_n\big(\Lambda(s),X_{\Lambda(s)}^{-k\tau}\big)\big)\mathrm{d}s\big\|^2\big]\big)^{1/2}\\
           &\leq \big(\mathbb{E}\big[\big\|\int_{-k\tau}^t S(t-s)\big(f_n(s,X_s^{-k\tau})-f_n\big(\Lambda(s),X_s^{-k\tau}\big)\big)\mathrm{d}s\big\|^2\big]\big)^{1/2}\\
           &+\big(\mathbb{E}\big[\big\|\int_{-k\tau}^t S(t-s)\big(f_n(\Lambda(s),X_s^{-k\tau})-f_n\big(\Lambda(s),X_{\Lambda(s)}^{-k\tau}\big)\big)\mathrm{d}s\big\|^2\big]\big)^{1/2}\\
           &\leq 2C_f \Big(\sqrt{1+\sup_{k\in\mathbb{N}}\sup_{s\geq -k\tau}\mathbb{E}[\|X^{-k\tau}_s\|^2]}{h^\kappa}+\sqrt{C_X(\nu_1,r)}h^{\nu_1}\Big)\int_{-k\tau}^t \|S(t-s)\|_{\mathcal{L}(H)}\mathrm{d}s\\
           &\leq \frac{1}{{\lambda_1}}2C_f \Big(\sqrt{1+\sup_{k\in\mathbb{N}}\sup_{s\geq -k\tau}\mathbb{E}[\|X^{-k\tau}_s\|^2]}h^{{\kappa}}+\sqrt{C_X(\nu_1,r)}{h^{\nu_1}}\Big),
\end{align*}
where to get the last line, we use the following estimate based on {Proposition \ref{prop:semiprop}},
\begin{align}\label{eqn:Sintegralestimate}
  \int_{-k\tau}^t \|S(t-s)\|_{\mathcal{L}(H)}\mathrm{d}s\leq \int_{-k\tau}^t e^{-{\lambda_1}(t-s)}\mathrm{d}s\leq \frac{1}{\alpha}.  
\end{align}
Regarding the term $J_4$, by Assumption \ref{as:fg} and the estimate \eqref{eqn:Sintegralestimate} one has that
\begin{align*}
    \mathbb{E}\big[\big\|\int_{-k\tau}^t S(t-s)\big(f_n\big(\Lambda(s),X_{\Lambda(s)}^{-k\tau}\big)-f_n(\Lambda(s),\bar{X}_s^{n,-k\tau})\big)\mathrm{d}s\big\|^2\big]\leq \frac{C_f^2}{{\lambda_1}^2} \sup_{k,s}\mathbb{E}[\|X_{s}^{-k\tau}-\hat{X}_s^{n,-k\tau}\|^2].
\end{align*}
For term $J_5$, following the estimate \eqref{eqn:Sdiffintegralestimate}
yields
\begin{align*}
    &\mathbb{E}\big[\big\|\int_{-k\tau}^tS(t-s)(S(s-\Lambda(s))-\text{Id})f_n(\Lambda(s),\bar{X}_s^{n,-k\tau})\mathrm{d}s\big\|^2\big]\\
    &\leq  2L_{f,g}^2 \big(1+\sup_{k\in\mathbb{N}}\sup_{t\geq -k\tau}\mathbb{E}[\|\hat{X}^{n,-k\tau}_t\|^2]\big) C_2(\nu_1)^2C_2(\nu_1)^2\frac{{\lambda_1}^{2(\nu_1-1)} \Gamma(1-\nu_1)^2}{4} h^{2\nu_1}.
\end{align*}

Directly applying the It\^o isometry and the estimate \eqref{eqn:Sdiffestimate},  we have the bound for $J_6$:
\begin{align*}
   & \mathbb{E}\big[\big\|\int_{-k\tau}^tS(t-s)(\text{Id}-P_n)g(s,X_s^{-k\tau})\mathrm{d}W(s)\big\|^2\big]\\
    &\leq 2L_{f,g}^2\big(1+\sup_{k\in\mathbb{N}}\sup_{t\geq -k\tau}\mathbb{E}[\|\hat{X}^{n,-k\tau}_t\|^2]\big)\int_{-k\tau}^t \|S(t-s)(\text{Id}-P_n)\|^2_{\mathcal{L}(H)}\mathrm{d}s\\
    &\leq \frac{2L_{f,g}^2}{\lambda_{n+1}^2}\big(1+\sup_{k\in\mathbb{N}}\sup_{t\geq -k\tau}\mathbb{E}[\|\hat{X}^{n,-k\tau}_t\|^2]\big).
\end{align*}
Through the It\^o isometry, Assumption \ref{as:A}, Assumption \ref{as:fg}, Lemma \ref{lem:solcontinuity} and a similar estimate as \eqref{eqn:Sintegralestimate}, one can derive the bound for $J_7$ as follows
\begin{align*}
           &\big(\mathbb{E}\big[\big\|\int_{-k\tau}^t S(t-s)\big(g_n(s,X_s^{-k\tau})-g_n\big(\Lambda(s),X_{\Lambda(s)}^{-k\tau}\big)\big)\mathrm{d}W(s)\big\|^2\big]\big)^{1/2}\\
           &\leq \big(\mathbb{E}\big[\big\|\int_{-k\tau}^t S(t-s)\big(g_n(s,X_s^{-k\tau})-g_n\big(\Lambda(s),X_{s}^{-k\tau}\big)\big)\mathrm{d}W(s)\big\|^2\big]\big)^{1/2}\\
           &\quad +\big(\mathbb{E}\big[\big\|\int_{-k\tau}^t S(t-s)\big(g_n(\Lambda(s),X_s^{-k\tau})-g_n\big(\Lambda(s),X_{\Lambda(s)}^{-k\tau}\big)\big)\mathrm{d}W(s)\big\|^2\big]\big)^{1/2}\\
           &\leq  2C_g\Big(\sqrt{1+\sup_{k\in\mathbb{N}}\sup_{s\geq -k\tau}\mathbb{E}[\|X^{-k\tau}_s\|^2]}{h^\kappa}+\sqrt{C_X(\nu_1,r)}h^{\nu_1}\Big)\big(\int_{-k\tau}^t \|S(t-s)\|^2_{\mathcal{L}(H)}\mathrm{d}s\big)^{1/2}\\
          & \leq  \frac{2C_g}{\sqrt{2{\lambda_1}}} \Big(\sqrt{1+\sup_{k\in\mathbb{N}}\sup_{s\geq -k\tau}\mathbb{E}[\|X^{-k\tau}_s\|^2]}h^{\kappa}+\sqrt{C_X(\nu_1,r)}{h^{\nu_1}}\Big).
\end{align*}
Regarding the term $J_8$, by the It\^o isometry, Assumption \ref{as:fg} and the estimate \eqref{eqn:Sintegralestimate} one has that
\begin{align*}
    \mathbb{E}\big[\big\|\int_{-k\tau}^t S(t-s)\big(g_n\big(\Lambda(s),X_{\Lambda(s)}^{-k\tau}\big)-g_n(\Lambda(s),\bar{X}_s^{n,-k\tau})\big)\mathrm{d}W(s)\big\|^2\big]\leq \frac{C_g^2}{2{\lambda_1}} \sup_{k,s}\mathbb{E}[\|X_{s}^{-k\tau}-\hat{X}_s^{n,-k\tau}\|^2].
\end{align*}

Finally, applying the It\^o isometry and the linear growth of $g$ in Assumption \ref{as:fg}, and following the estimate \eqref{eqn:SdiffWintegralestimate}, we have the bound for $J_9$ that
\begin{align*}
    &\mathbb{E}\big[\big\|\int_{-k\tau}^tS(t-s)(S(s-\Lambda(s))-\text{Id})g_n(\Lambda(s),\bar{X}_s^{n,-k\tau}))\mathrm{d}W(s)\big\|^2\big]\\
    &\leq 2L_{f,g}^2\big(1+\sup_{k\in\mathbb{N}}\sup_{t>-k\tau}\mathbb{E}[\|\hat{X}^{n,-k\tau}_t\|^2]\big)\int_{-k\tau}^t \|A^{\nu_1}S(t-s)A^{-\nu_1}(S(s-\Lambda(s))-\text{Id})\|^2_{\mathcal{L}(H)}\mathrm{d}s\\
  &\leq 2L_{f,g}^2\big(1+\sup_{k\in\mathbb{N}}\sup_{t>-k\tau}\mathbb{E}[\|\hat{X}^{n,-k\tau}_t\|^2]\big)C_1(\nu_1)^2 h^{2\nu_1}
C_2(\nu_1)^2\frac{(2{\lambda_1})^{2\nu_1-1} \Gamma(1-2\nu_1)^2}{2}.
\end{align*}
In total, we have that
\begin{align*}
     &\sup_{k\in \mathbb{N}}\sup_{t\geq k\tau}\big(\mathbb{E}[\|X_t^{-k\tau}-\hat{X}_t^{n,-k\tau}\|^2]\big)^{1/2}\leq \sum_{i=1}^9 \sup_{k\in \mathbb{N}}\sup_{t\geq k\tau}(\mathbb{E}[\|J_i\|^2])^{1/2} \\
     &\leq C(h^{\nu_1}+{h^{\kappa}}+\frac{1}{\sqrt{\lambda_n^r}}+\frac{1}{\lambda_n}+\frac{1}{\lambda_{n+1}})+\big(\frac{C_f}{{\lambda_1}}+\frac{C_g}{\sqrt{{\lambda_1}}}\big)\sup_{k\in \mathbb{N}}\sup_{t\geq k\tau}\big(\mathbb{E}[\|X_t^{-k\tau}-\hat{X}_t^{n,-k\tau}\|^2]\big)^{1/2}.
\end{align*}
Because of $\frac{C_f}{{\lambda_1}}+\frac{C_g}{\sqrt{{\lambda_1}}}<1$ from Assumption \ref{as:lambda2}, we can conclude the final assertion.
\end{proof}
Note in Theorem \ref{thm:error}, one can take $\nu_1=r/2$ to achieve the fastest convergence.
\begin{corr}\label{corr:error} Assume Assumption \ref{as:A} to Assumption \ref{as:lambda2}. Let $X^{*}_{t}$ be the random periodic solution of SEE \eqref{eq:SPDE} and $\hat{X}^{n,*}_{t}$ be the random period solution of the Galerkin numerical approximation with the stepsize $h$ satisfying \eqref{eqn:hmainchoice}. Consider approximating $\hat{X}^{n,*}_{\cdot}$ through the sequence $\{\hat{X}^{n,-k\tau}_{\cdot}(\xi)\}_{k}$ with $\xi\in L^{2}(\Omega, \mathcal{F}_{-k\tau}, \mathbb{P};
  \dot{H}^r)$ for $r\in (0,1)$. Then there exists a constant $C$, which depends on $A$, $f$, $g$ and $r$, such that 
\begin{align}\label{eq:error}
   \sup_{t} \big(\mathbb{E}[\|X^*_t-\hat{X}^{n,*}_t\big\|^2]\big)^{1/2}\leq C \big(h^{{\frac{r}{2}\wedge \kappa}}+\frac{1}{\sqrt{\lambda_n^r}}\big).
\end{align}
\end{corr}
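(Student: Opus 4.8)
The plan is to reduce the estimate to the three convergence results already in hand by a single triangle inequality in the norm $\|\cdot\|_{L^2(\Omega)}:=(\mathbb{E}[\|\cdot\|^2])^{1/2}$. Because the left-hand side $\sup_t\|X^*_t-\hat X^{n,*}_t\|_{L^2(\Omega)}$ involves only the two random periodic solutions and not the auxiliary starting profile, I am free to choose $\xi$; I would take $\xi=0$ (a deterministic element of $\dot H^r$ for every $r\in(0,1)$), so that every $\xi$-dependent constant appearing in Lemmas \ref{lem:boundedness}, \ref{lem:boundednessG} and Theorem \ref{thm:error} collapses to a constant depending only on $A$, $f$, $g$ and $r$. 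For this fixed $\xi$ and any $t$ I would write
$$\|X^*_t-\hat X^{n,*}_t\|_{L^2(\Omega)}\le \|X^*_t-X^{-k\tau}_t(\xi)\|_{L^2(\Omega)}+\|X^{-k\tau}_t(\xi)-\hat X^{n,-k\tau}_t(\xi)\|_{L^2(\Omega)}+\|\hat X^{n,-k\tau}_t(\xi)-\hat X^{n,*}_t\|_{L^2(\Omega)},$$
and then send $k\to\infty$ while keeping $t$ fixed.

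The middle term is precisely the quantity controlled by Theorem \ref{thm:error}: with the choice $\nu_1=r/2$ (allowed by that theorem, and giving its fastest rate) it is bounded by $C(h^{r/2}+\lambda_n^{-r/2})$ uniformly in $k$ and $t$, and in particular this bound does not degrade as $k\to\infty$. The first and third terms, on the other hand, vanish in the limit: the first by Theorem \ref{thm:main1} (convergence of the pull-back of \eqref{eq:SPDE} to $X^*_t$, regardless of the initial datum), and the third by Theorem \ref{thm:main2} (convergence of the pull-back of the scheme \eqref{eq:con_RandM} to its own random periodic solution $\hat X^{n,*}_t$). Fixing $t$ and letting $k\to\infty$, the two outer terms can be made smaller than any $\epsilon>0$, so that $\|X^*_t-\hat X^{n,*}_t\|_{L^2(\Omega)}\le C(h^{r/2}+\lambda_n^{-r/2})+2\epsilon$; since $\epsilon$ is arbitrary the estimate \eqref{eq:error} follows at each $t$.

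The argument is soft once the three theorems are available, so the only genuine point of care is the order of the supremum over $t$ and the limit $k\to\infty$. I would handle this by carrying out the limiting argument at each fixed $t$, where the convergences of Theorems \ref{thm:main1} and \ref{thm:main2} hold pointwise, and by exploiting that the bound of Theorem \ref{thm:error} is uniform in both $k$ and $t$ with a $t$-independent constant; only after the $t$-independent estimate is obtained do I take the supremum over $t$. No new loss of accuracy is introduced: the suboptimal spatial factor $\lambda_n^{-r/2}$ is inherited verbatim from Theorem \ref{thm:error}, where the constraint $C_f/\alpha<1$ (Assumption \ref{as:lambda2}) prevents a sharper exponent.
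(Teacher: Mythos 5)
Your proposal is correct and is exactly the argument the paper intends: the corollary is stated without a separate proof precisely because it follows from the triangle inequality combining Theorem \ref{thm:main1}, Theorem \ref{thm:error} with $\nu_1=r/2$ (whose bound is uniform in $k$ and $t$), and Theorem \ref{thm:main2}, letting $k\to\infty$ at each fixed $t$ before taking the supremum. Your choice $\xi=0$ is a sensible refinement, since it justifies the stated dependence of $C$ on $A$, $f$, $g$, $r$ only, whereas the constant in Theorem \ref{thm:error} would otherwise carry a dependence on $\xi$.
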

Corollary \ref{corr:error} implies that the best order of convergence can be achieved is $1/2-\epsilon$ for an arbitrarily small $\epsilon>0$ if $\kappa=1/2$.  Moreover, as the mild form of $X^*_t$ is welled defined in $\bigcap_{r\in (0,1)}L^2(\Omega;\dot{H}^r)$ shown in Theorem \ref{thm:main1}, and $\dot{H}^{r_1}\subset \dot{H}^{r_2}$ for $r_1\geq r_2$, it is not surprised to observe that the order of convergence would be higher if we adopt the approximation sequence with initial condition in $L^{2}(\Omega; \dot{H}^r)$ under a higher value of $r$.
\section*{Declarations}
\begin{itemize}
\item Funding: This work is supported by the Alan Turing Institute for funding this work under EPSRC grant EP/N510129/1 and EPSRC for funding though the project EP/S026347/1, titled 'Unparameterised multi-modal data, high order signatures, and the mathematics of data science'.

\item Competing  interests:  Authors  are  required  to  disclose  financial  or  non-financial  interests  that  are  directly  or  indirectly  related  to  the  work submitted for publication.

\item Data Availability Statement: Data sharing not applicable to this article as no datasets were generated or analysed during the current study.
\end{itemize}

\bibliographystyle{unsrt}  


\end{document}